\documentclass[11pt,a4paper]{scrartcl}       
\usepackage[utf8]{inputenc}
\usepackage[english]{babel}
\usepackage{amssymb, amsmath, amsfonts, amsthm}
\usepackage{enumitem}
\usepackage{colonequals}
\usepackage{empheq,fancybox}
\usepackage{esint}
\usepackage{authblk}
\usepackage{tikz}
\usepackage{float}
\usepackage{hyperref}

\usetikzlibrary {intersections}
\numberwithin{equation}{section}

\setlength{\voffset}{-1in}
\setlength{\topmargin}{1.5cm}
\setlength{\hoffset}{-1in}
\setlength{\oddsidemargin}{2cm}
\setlength{\evensidemargin}{2cm}
\setlength{\textwidth}{17cm}
\setlength{\textheight}{24cm}

 \newcommand{\hm}[1]{\leavevmode{\marginpar{\tiny%
 $ \hbox to 0mm{\hspace*{-0.5mm} $ \leftarrow $ \hss}%
 \vcenter{\vrule depth 0.1mm height 0.1mm width \the\marginparwidth}%
 \hbox to
 0mm{\hss $ \rightarrow $ \hspace*{-0.5mm}} $ \\\relax\raggedright #1}}}
\newcommand{\euler}{\mathrm{e}} 
\newcommand{\drm}{\mathrm{d}}
\newcommand{\dvol}{\mathrm{dvol}}
\newcommand{\RR}{\mathbb{R}}

\newcommand{\cC}{\mathcal{C}}

\renewcommand{\epsilon}{\varepsilon}
\renewcommand{\phi}{\varphi}
\DeclareMathOperator{\supp}{\mathop{supp}}
\DeclareMathOperator{\diam}{\mathop{diam}}
\DeclareMathOperator{\dist}{\mathop{dist}}
\DeclareMathOperator{\Tr}{\mathop{Tr}}

\DeclareMathOperator{\Ric}{\mathop{Ric}}
\DeclareMathOperator{\Vol}{\mathop{Vol}}

\DeclareMathOperator{\Sec}{Sec}

\DeclareMathOperator{\Hess}{Hess}
\DeclareMathOperator{\II}{II}


\newcommand{\bd}{\partial}
\newcommand{\e}{\mathrm e}

\newtheorem{thm}{Theorem}[section]
\newtheorem{lem}[thm]{Lemma}
\newtheorem{prop}[thm]{Proposition}
\newtheorem{cor}[thm]{Corollary}
\theoremstyle{definition}
\newtheorem{definition}[thm]{Definition}
\newtheorem{remark}[thm]{Remark}
\newtheorem{example}[thm]{Example}
%
%
%
%



\begin{document}
\title{Quantitative Sobolev extensions and the Neumann heat kernel for integral Ricci curvature conditions}
\author{Olaf Post\footnote{Universit\"at Trier, olaf.post@uni-trier.de}~~~~~~~~Xavier Ramos Oliv\'e\footnote{Worchester Polytechnic Institute, xramosolive@wpi.edu}~~~~~~~~Christian Rose\footnote{Max Planck Institute for Mathematics in the Sciences, Leipzig, crose@mis.mpg.de}}

\maketitle
\footnotetext{\emph{MSC Classification:} 35P15, 58J35, 47B38, 46E35.}
\footnotetext{\emph{Key words: } Sobolev extensions, integral Ricci curvature bounds, Neumann heat equation, heat kernel estimate.}
\begin{abstract}We prove the existence of Sobolev extension operators for certain uniform classes of domains in a Riemannian manifold with an explicit uniform bound on the norm depending only on the geometry near their boundaries. We use this quantitative estimate to obtain uniform Neumann heat kernel upper bounds and gradient estimates for positive solutions of the Neumann heat equation assuming integral Ricci curvature conditions and geometric conditions on the boundary. Those estimates also imply quantitative lower bounds on the first Neumann eigenvalue of the considered domains.
\end{abstract}
\section{Introduction}
The first aim of this article is to prove the existence of Sobolev extension operators for domains with smooth boundary in a Riemannian manifold whose norms depend only on the geometry near the boundary.  To the best of our knowledge, we give the first explicit construction, giving a quantitative bound on the norm, that depends only on sectional and principal curvature assumptions. Such an extension operator provides a tool for geometric applications, especially when working on \emph{classes} of manifolds fulfilling certain geometric bounds. Our second aim is to use these extension operators to derive quantitative upper bounds for the Neumann heat kernel, gradient estimates for positive solutions of the Neumann heat equation, and lower bounds on the first Neumann eigenvalue under $L^p$-Ricci curvature conditions for sufficiently regular compact domains.

Let $M=(M^n,g)$ be a complete Riemannian manifold of dimension $n\geq 2$ with possibly non-empty boundary $\partial M$. Fix an open subset $\Omega\subset M$ such that $\overline \Omega \ne M$ is a smooth manifold with boundary.  A linear and bounded operator $E_\Omega\colon H^1(\Omega)\to H^1(M)$ is called \emph{extension operator} for $\Omega$, if it is a bounded right-inverse for the restriction operator of $\Omega$, i.e., if $E_\Omega$ satisfies
\begin{align}\label{defextension}
E_\Omega u{\restriction}_\Omega = u,\quad u\in H^1(\Omega),
\end{align}
and has bounded operator norm. 
Such extension operators have a long history, starting with the seminal work by Whitney~\cite{Whitney-34} 
 for extension operators of class $C^k$. It is well known, cf., e.g., Stein's monography~\cite[Thm.~5, Sec.~VI.3.1]{Stein-70}, 
 that for a domain $\Omega \subset \RR^n$ with Lipschitz boundary, such a Sobolev extension operator $E_\Omega$ exists. Its norm depends implicitly on the Lipschitz constants of the charts as well as other properties of the atlas of $\bd\Omega$. Therefore, this construction does not imply the existence of extension operators for classes of subsets of manifolds whose norms depend only on curvature restrictions.
Sobolev extension operators especially for finite sets are constructed in~\cite{FeffermanIsraelLuli-14}, see also the references therein. The name ``extension operator'' is also used in a slightly different context, namely as a bounded right inverse of a Sobolev trace operator, i.e., of a restriction of functions to proper submanifolds, see, e.g.,~\cite{GluckZhu-19, GrosseSchneider-13} and the references therein. 

However, to the best of our knowledge, apart from Stein's result for subsets in $\RR^n$ mentioned above, existing constructions of extension operators for $\Omega\subset M$ in the sense of~\eqref{defextension} do not focus on quantitative bounds on $\Vert E_\Omega\Vert$. Our aim is to construct extension operators whose operator norms depend only on geometric quantities such as bounds on the second fundamental form and the sectional curvature in a suitable tubular neighborhood of $\partial\Omega$,. Therefore, we introduce the following class of subsets of a manifold.
\begin{definition}\label{defrhk}
Let $M$ be a Riemannian manifold of dimension $n\geq 2$, $r>0$ and $H,K\geq 0$. An open subset $\Omega\subset M$  is called \emph{$(r,H,K)$-regular} if
  \begin{enumerate}[label=(\roman*)]
  \item $\overline \Omega\neq M$ is a connected smooth manifold with (smooth) boundary $\partial\Omega$,
\item \label{exterior} the \emph{exterior rolling $r$-ball condition}: for any $x\in\partial\Omega$ there is a point $p \in M \setminus \Omega$ such that $B(p,r) \subset M \setminus \Omega$ and $\overline{B(p,r)}\cap\partial\Omega=\{x\}$;
\item \label{interior} the \emph{interior rolling $r$-ball condition}: for any $x\in\partial\Omega$ there is a point $p \in \Omega$ such that $B(p,r) \subset \Omega$ and $\overline{B(p,r)}\cap\partial\Omega=\{x\}$;
\item \label{main:secondf}
  the second fundamental form $\II$ is bounded from below by $-H$ w.r.t.\ the inward and the outward pointing normals of $\partial\Omega$,
\item \label{main:tubular}
  the sectional curvature satisfies $-K\leq \Sec\leq K$ on the tubular neighborhood $T(\partial\Omega,r)$ of $\partial\Omega$.
\end{enumerate}
\end{definition}
Our first main result is now as follows:
\begin{thm}\label{main1} Fix $K,H\geq 0$, and a complete Riemannian manifold $M$ of dimension $n\geq 2$. There exists an explicitly computable $r_0=r_0(K,H)>0$ such that for any $r\in(0,r_0]$, there exists an explicitly computable constant $C(r,K,H)>0$ and 
an extension operator $E_\Omega\colon H^1(\Omega)\to H^1(M)$ satisfying
\[
\Vert E_\Omega\Vert\leq C(K,H,r)
\]
for any open $(r,H,K)$-regular subset $\Omega$.
\end{thm}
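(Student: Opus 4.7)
The plan is to construct $E_\Omega$ by boundary reflection in Fermi (boundary normal) coordinates across $\partial\Omega$, cut off smoothly so that the result is supported in a tubular neighborhood whose width is controlled solely by $K$ and $H$.

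\textbf{Step 1 (Uniform tubular neighborhood and metric comparison).} The interior and exterior rolling $r$-ball conditions ensure that the signed distance $s$ to $\partial\Omega$ is smooth on $T(\partial\Omega,r)$ and that the normal map $\Phi(y,s)=\exp_y(sN_y)$ is a diffeomorphism of $\partial\Omega\times(-r,r)$ onto $T(\partial\Omega,r)$. Combining $|\Sec|\le K$ on $T(\partial\Omega,r)$ with $\|\II\|\le H$ along $\partial\Omega$ and running a Jacobi-field / matrix Riccati comparison for the shape operator of the parallel hypersurfaces $\partial\Omega_s$, I would fix $r_0=r_0(K,H)\in(0,r]$ so small that on $|s|\le r_0$ the Fermi metric
\[
g \;=\; \mathrm d s^2 + g_\partial(y,s)
\]
and its volume density admit two-sided bounds depending only on $K,H,r_0$.

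\textbf{Step 2 (Definition of $E_\Omega$).} Pick $\chi\in C^\infty_c(\RR)$ with $\chi\equiv 1$ on $[-r_0/2,r_0/2]$, $\supp\chi\subset(-r_0,r_0)$, and $|\chi'|\le C/r_0$. For $u\in C^\infty(\overline\Omega)$ define in Fermi coordinates
\[
(E_\Omega u)(p) \;=\; \begin{cases}
u(p), & p\in\Omega,\\[1mm]
\chi(s)\,u\bigl(\Phi(y,-s)\bigr), & p=\Phi(y,s),\ 0<s<r_0,\\[1mm]
0, & \text{otherwise}.
\end{cases}
\]
Reflection continuity across $\{s=0\}$ ensures that $E_\Omega u$ has no jump across $\partial\Omega$, so $E_\Omega u\in H^1(M)$; the construction then extends to $u\in H^1(\Omega)$ by density.

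\textbf{Step 3 (Quantitative $H^1$-estimate).} Splitting $\|E_\Omega u\|_{H^1(M)}^2$ over $\Omega$ and the exterior strip $\{0<s<r_0\}$ and changing variables $s\mapsto -s$ in Fermi coordinates, Step 1 gives a volume-element ratio $\Lambda=\Lambda(K,H,r_0)$ with
\[
\|E_\Omega u\|_{L^2(\{0<s<r_0\})}^2 \;\le\; \Lambda\,\|u\|_{L^2(\Omega)}^2.
\]
Decomposing the gradient into normal and tangential parts, using $\partial_s[\chi(s)u(y,-s)] = \chi'(s)u(y,-s)-\chi(s)(\partial_s u)(y,-s)$, the normal part contributes at most $(C/r_0)^2 \Lambda \|u\|_{L^2(\Omega)}^2 + 2\Lambda\|\partial_s u\|_{L^2(\Omega)}^2$; the tangential part is bounded by $\Lambda'\|\nabla_\partial u\|_{L^2(\Omega)}^2$ with $\Lambda'$ the comparison constant between $g_\partial(\cdot,s)$ and $g_\partial(\cdot,-s)$ from Step 1. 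Summing yields the asserted $\|E_\Omega u\|_{H^1(M)}\le C(K,H,r)\,\|u\|_{H^1(\Omega)}$ with an explicit $C(K,H,r)$ built from $\Lambda,\Lambda',r_0$.

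\textbf{Main obstacle.} The technical heart is Step 1: upgrading the pointwise bounds $\|\II_{\partial\Omega}\|\le H$ and $|\Sec|\le K$ to quantitative two-sided bounds on $g_\partial(y,s)$ for all $|s|\le r_0$. This amounts to solving the matrix Riccati equation $S'(s)+S(s)^2+R(s)=0$ for the shape operator of $\partial\Omega_s$ with $\|S(0)\|\le H$ and $\|R(s)\|\le K$, and proving that the solution and the associated Jacobi tensor stay uniformly invertible up to an explicit $r_0(K,H)$. The choice of $r_0$ forced by this Riccati analysis is exactly what determines the constant $C(K,H,r)$ in the theorem.
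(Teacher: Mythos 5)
Your proposal follows essentially the same route as the paper: parametrize a tubular neighborhood by Fermi coordinates (with Warner's focal-distance estimate giving the explicit $r_0(K,H)$ and the rolling-ball conditions preventing self-overlap), extend $u$ by reflection in the normal coordinate, and control the volume elements of the parallel hypersurfaces at $s$ and $-s$ via a Riccati/Jacobi comparison so that the resulting Jacobian ratio depends only on $K,H,r$. The only differences are minor: the paper uses the Hestenes-type reflection $-3u(\gamma(-s))+4u(\gamma(-s/2))$ to obtain a $C^1$ extension and ball cut-offs $\varphi_x$ with gradient bound from Laplacian comparison, whereas you use a plain even reflection (sufficient for $H^1$, provided you invoke the standard gluing lemma for functions with matching traces across the smooth hypersurface $\partial\Omega$) and a cut-off $\chi(s)$ of the signed distance, which is legitimate since $\vert\nabla s\vert=1$ on the tubular neighborhood.
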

\begin{remark}
\indent
\begin{enumerate}[label=(\roman*)]
\item
Although we assume completeness of $M$ in the above theorem, the proof also applies for incomplete $M$ and $\Omega\subset M$ in any connected component $M'$ satisfying the conditions of Theorem~\ref{main1} such that $\overline\Omega\neq M'$ and such that $\cC^1(\overline \Omega)$ is dense in $H^1(\Omega)$.
\item We do not expect that our constant $C(K,H,r)$ is sharp.  If $\Omega=B(0,R_0)$ is an open ball in $\RR^n$, then we can choose $r_0=r=R_0/2$ and $H=1/R_0$ 
(see Examples~\ref{ex:sphere} and~\ref{ex:sphere2}).  A simple scaling argument shows that $\Vert E_\Omega \Vert$ is bounded from above by the norm of the extension operator on the unscaled ball $B(0,1)$ times $R_0^{-1}=(2r)^{-1}$.
\item The upper bound on the sectional curvature and lower bound on the second fundamental form ensure that the exponential map of $\partial\Omega$ is well-defined on $T(\partial\Omega,r)$ for $r\leq r_0$. The distance $r_0$ is in fact bounded by the minimal focal distance of all points in $\partial\Omega$.
\item Note that a lower bound on the second fundamental form w.r.t.~a fixed direction is equivalent to an upper bound of the second fundamental form w.r.t.~the opposite direction.
\item The upper and lower bounds for the sectional and principal curvatures on $T(\partial\Omega,r)$ and $\partial\Omega$ can be replaced by two different upper and lower bounds on $T(\partial\Omega,r)\cap\Omega$ and $T(\partial\Omega,r)\setminus\Omega$. Moreover, $\Sec$ actually need not be a priori bounded from below in $T(\partial\Omega,r)\setminus\Omega$ if one assumes additionally the existence of cut-off functions.
\item The interior rolling $r$-ball condition ensures that $\Omega$ is "sufficiently thick": there always fits a ball of a controllable size into the interior, and geodesics emanating from different points stay unique up to $r$. 
\item The exterior rolling $r$-ball condition, cf.~Figure~\ref{Figure2}, is indeed necessary: the proof of Theorem~\ref{main1} relies on a particular parametrization, the so-called \emph{Fermi-coordinates}, of the tubular neighborhood. The exterior rolling $r$-ball condition prevents the tubular neighborhood from self-overlapping, ensuring that the parametrization and the extension operator are well defined. For a problematic case, see Figure~\ref{Figure1}.

\end{enumerate}
\end{remark}
%
%
%
%
%
%
%

\begin{figure}[ht]
  \begin{minipage}[b]{0.45\linewidth}
    \centering
    \begin{tikzpicture}\label{Figure1}[scale=1.5]
      
      \draw (1.91cm,1.1cm) arc [start angle=30,end angle=360,radius=2.2cm];
      \draw (1.56cm,0.9cm) arc [start angle=30,end angle=360,radius=1.8cm];
      \draw (2.2cm,0cm) arc [start angle =0, end angle =180, radius =0.2cm];
      \draw (1.56cm,0.9cm) arc [start angle =210, end angle =390,
      radius =0.2cm];
      
      \fill[black!10!white] (1.56cm,0.9cm) arc [start angle =210, end
      angle =390, radius =0.2cm] arc[start angle=30,end
      angle=360,radius=2.2cm] (2.2cm,0cm) arc [start angle =0, end
      angle =180, radius =0.2cm] arc[start angle=360,end
      angle=30,radius=1.8cm];
 
      \draw (1.04cm,0.6cm) arc [start angle=30,end angle=360,radius=1.2cm];
      \draw (2.43cm,1.4cm) arc [start angle=30,end angle=360, radius = 2.8cm];
      \draw (2.8cm,0cm) arc [start angle =0, end angle =180, radius =0.8cm];
      \draw (1.04cm,0.6cm) arc [start angle =210, end angle =390, radius =0.8cm];
      
      \path[name path=upcircle bdy1] (2cm,0.8cm) arc [start angle =90, end angle =180, radius =0.8cm];
      \path[name path=upcircle bdy2] (2.8cm,0cm) arc [start angle =0, end angle =90, radius =0.8cm];
      \path[name path=downcircle bdy1] (1.04cm,0.6cm) arc [start angle =210, end angle =300, radius =0.8cm];
      \path[name path=downcircle bdy2] (2.43cm,1.4cm) arc [start angle =30, end angle =-60, radius =0.8cm];
      \fill[black!20!white] [name intersections={of=upcircle bdy1 and downcircle bdy1, by=x}] [name intersections={of=upcircle bdy2 and downcircle bdy2, by=y}] (x) arc [start angle = 154.6806274, end angle = 55.3193726, radius =0.8cm] arc [start angle = -25.3193726  , end angle = -124.6806274, radius =0.8cm];
      \draw  (-1cm , 1.73cm) node {$\Omega$};
      \draw[<->, black, very thick] (0cm,1.8 cm) -- (0cm, 1.2cm);
      \draw[black] (0.4cm, 1.4cm) node {$r$};
    \end{tikzpicture}
    \caption{A problematic neighborhood not fulfilling the exterior
      rolling $r$-ball condition.}
  \end{minipage}
  \hspace{.5cm}
  \begin{minipage}[b]{0.45\linewidth}
    \centering
    \begin{tikzpicture}[scale =1.7, yshift=5pt]\label{Figure2}
      \draw[thick] (-1,0) .. controls (-0.8,0.555) and (-0.555,1) .. (0,1).. controls (0.555,1) and (0.8,0.555) .. (1,0) .. controls (1.2,-0.555) and (1.445,-1) .. (2,-1).. controls (2.555,-1) and (2.8,-0.555) .. (3,0);
      \filldraw[black] (1.22,-0.5) circle (1pt);
      \draw (1.22,-0.5) arc[start angle =210, end angle =570, radius=0.5];
      \filldraw (1.65,-0.25) circle (1pt);
      \draw  (-1.2 , -0.2) node {$\partial\Omega$};
      \draw[black]  (1.1 , -0.6) node {$x$};
      \draw  (1.53, -0.35) node {$p$};
    \end{tikzpicture}
    \vskip 3em 
    \caption{The interior/exterior rolling $r$-ball.}
      \end{minipage}
\end{figure}

\subsection*{Applications to integral Ricci curvature assumptions}
We present here some quantitative applications of Theorem~\ref{main1}.
Our main motivation to study extension operators relies on our interest in the Neumann heat equation of compact (sub-)manifolds.
Denote by $\Delta\geq 0$ the Laplace-Beltrami operator on a Riemannian manifold $M$. Let $u$ be a positive solution of the heat equation
\begin{align}\label{heatequation}
\partial_t u =-\Delta u, 
\end{align}
where one assumes additionally $\partial_\nu u=0$ in case $\partial M\neq \emptyset$, and $\nu$ the inward pointing normal.
If $D>0$, $K\geq 0$, then it was shown in~\cite{LiYau-86} that there are $c_1,c_2,c_3>0$ such that for any compact $M$ with convex smooth boundary, $\diam M\leq D$ and Ricci curvature bounded from below by $-K$, the solution $u$ satisfies
\begin{align}\label{gradientestimate}
c_1\vert \nabla \ln u\vert^2-\partial_t \ln u\leq c_2K+c_3t^{-1}, \quad t>0.
\end{align}
From such a gradient estimate, Li and Yau deduced a Harnack inequality as well as an upper bound for the Neumann heat kernel $h$ of $M$ of the form
\begin{align}\label{liyauupper}
h_t(x,y)\leq c_1' \Vol(B(x,\sqrt t))^{-1/2}\Vol(B(y,\sqrt t))^{-1/2}\exp\left(c_2'Kt-\frac{d(x,y)^2}{c_3' t}\right),\quad x,y\in M, t>0.
\end{align}
Inequality~\eqref{gradientestimate}, and in turn~\eqref{liyauupper}, has been generalized in~\cite{Wang-97} to compact manifolds $M$ with smooth boundary satisfying the interior rolling $r$-ball condition (cf. Definition~\ref{defrhk}~\ref{interior} and also Remark~\ref{rmk:Rsmall}), 
second fundamental form bounded from below, and Ricci curvature bounded from below by $-K$, $K\geq 0$. For an extensive treatment of Neumann heat kernel estimates on non-compact domains, see, e.g., \cite{GyryaSC-11}.
\\

During the last decades there was an increasing interest in relaxing the uniform pointwise Ricci curvature lower bound to integral Ricci curvature bounds. Those provide estimates that are more stable under perturbations of the metric. In the following, we denote
\[
\rho\colon M\to\RR, \quad x\mapsto \min(\sigma(\Ric_x)),
\]
where the Ricci tensor $\Ric$ of $M$ is viewed as a pointwise
endomorphism on the cotangent bundle, and $\sigma(A)$ denotes the
spectrum of an operator $A$. For $x\in\RR$, the negative part of
$x \in \RR$ will be denoted by $x_-=\max(0,-x)\ge 0$. For a subset
$\Omega\subset M$, $p>n/2$, and $R>0$ we let
\begin{equation}\label{def:lpricci}
\kappa_{\Omega}(p,R):=\sup_{x\in \Omega}\left(\frac 1{\Vol(B(x,R))}\int_{B(x,R)}  \rho_-^p\dvol\right)^\frac{1}{p},
\end{equation}
measuring the $L^p$-mean of the negative part of Ricci curvature uniformly in balls of radius $R$ with center in $\Omega$. It is convenient to work with the scale-invariant quantity $R^2\kappa_\Omega(p,R)$. If $M$ is complete with $\partial M=\emptyset$, assuming $\kappa_{M}(p,R)$ is small for $p>n/2$ led to several analytic and geometric generalizations of results that depend on pointwise lower Ricci curvature bounds, see, e.g.,~\cite{Rose-17,DaiWeiZhang-18,Gallot-88, PetersenWei-97, PetersenWei-01, PetersenSprouse-98, Olive-19, RoseStollmann-15, Rose-17a, Aubry-07, OliveSetoWeiZhang-19, ZhangZhu-17, ZhangZhu-18}. 

If $M$ is allowed to have non-empty boundary $\partial M\neq \emptyset$ and small $\kappa_M(p,R)$, $p>n/2$, it is not known that a version of~\eqref{gradientestimate} can be derived by only adapting the techniques from~\cite{LiYau-86,Wang-97}. However, there is a way to obtain~\eqref{gradientestimate} for proper compact subsets of a manifold: 
recently, the second author~\cite{Olive-19} obtained a generalized version of~\eqref{gradientestimate} for positive solutions of~\eqref{heatequation} on compact submanifolds $\Omega\subset M$ with smooth boundary, where the volume measure of $M$ is globally volume doubling and $\kappa_\Omega(p,\diam \Omega)$ is small. Note that the latter does not imply global volume doubling unless $M$ is compact. Additionally, the obtained gradient estimate relies a priori on Gau\ss{}ian upper bounds of the Neumann heat kernel~\eqref{liyauupper}, that were obtained in~\cite{ChoulliKayserOuhabaz-15}.   Note that such gradient estimates depend implicitly on the norm of a Sobolev extension operator of $\Omega$.

We will prove a uniform, quantitative and localized version: if $\kappa_M(p,R)$ is small for some $p>n/2$, we obtain quantitative Gau\ss{}ian Neumann heat kernel upper bounds~\eqref{liyauupper} in the spirit of~\cite{ChoulliKayserOuhabaz-15} depending only on geometric conditions as well as a generalization of~\eqref{gradientestimate} for all compact $\Omega\subset M$ with boundary satisfying certain regularity conditions that does neither depend on global volume doubling nor on extension operators with unknown operator norm. 

If $M$ is complete and $\partial M=\emptyset$, we denote by $p\in\cC^\infty((0,\infty)\times M\times M)$ the heat kernel of $M$, that is, the minimal fundamental solution of~\eqref{heatequation}. If $\Omega\subset M$ is a non-empty relatively compact domain with smooth boundary $\partial\Omega\neq \emptyset$, we let $h^\Omega\in\cC^\infty((0,\infty)\times M\times M)$ be  the Neumann heat kernel, i.e., the minimal fundamental solution of~\eqref{heatequation} subject to Neumann boundary conditions on $\partial \Omega$.

Our second main theorem is the following (recall the definition of $\kappa_M(p,R)$ in~\eqref{def:lpricci}).
\begin{thm}\label{main2}
Let $M$ be a complete Riemannian manifold of dimension $n\geq 2$, $p>n/2$, $R>0$, and $K,H\geq 0$. There exists an explicitly computable $r_0=r_0(H,K)>0$ sufficiently small (cf.~Remark~\ref{rmk:Rsmall}) such that for any $r\in(0,r_0]$, there are explicitly computable constants $C=C(n,p,r,R,H,K)>0$ and $\epsilon=\epsilon(n,p,r,H,K)>0$ such that if 
\[
R^2\kappa_M(p,R)\leq \epsilon,
\]
then for any $(r,H,K)$-regular domain $\Omega\subset M$ with $\diam \Omega\leq R/2$, the Neumann heat kernel $h^\Omega$ of $\Omega$ satisfies
\begin{align}
h^\Omega_t(x,x)\leq \frac{C}{\Vol_\Omega(x,\sqrt t)}, \quad x\in\Omega,\ t>0,
\end{align}
where $\Vol_\Omega(B(x,s)):=\Vol(\Omega\cap B(x,s))$.
\end{thm}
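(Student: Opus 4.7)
The plan is to derive the on-diagonal Neumann heat kernel bound by first establishing a family of localized $L^2$-Sobolev inequalities on $\Omega$ with the correct volume scaling, and then invoking the equivalence between such functional inequalities and pointwise heat kernel estimates (Grigor'yan, Saloff-Coste). The extension operator of Theorem~\ref{main1} is the crucial tool that lets us transfer local Sobolev inequalities from $M$---where they are available under the integral Ricci control---to $\Omega$ with explicit uniform constants.

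First, under $R^2\kappa_M(p,R)\le \epsilon$ for $\epsilon$ sufficiently small, results in the integral Ricci curvature literature (Dai--Wei--Zhang, Petersen--Wei, Rose, and related works cited in the introduction) provide uniform volume doubling on balls $B(x,s)\subset M$ with $s\le R$, together with a local Sobolev inequality
\begin{align*}
\|f\|_{L^{2n/(n-2)}(B(x,s))}^2\le A\,\frac{s^2}{\Vol(B(x,s))^{2/n}}\bigl(\|\nabla f\|_{L^2(M)}^2+s^{-2}\|f\|_{L^2(M)}^2\bigr),\quad f\in H^1(M),\ \supp f\subset B(x,s),
\end{align*}
with $A$ depending only on $n,p,\epsilon$. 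Second, for $u\in H^1(\Omega)$ and $x\in\Omega$, I set $v=E_\Omega u\in H^1(M)$ and choose a smooth cutoff $\chi$ equal to $1$ on $B(x,s/2)$ and supported in $B(x,s)$; applying the Sobolev inequality to $\chi v$ and using $\|E_\Omega u\|_{H^1(M)}\le C(K,H,r)\|u\|_{H^1(\Omega)}$ yields
\begin{align*}
\|u\|_{L^{2n/(n-2)}(\Omega\cap B(x,s/2))}^2\le A'\,\frac{s^2}{\Vol(B(x,s))^{2/n}}\bigl(\|\nabla u\|_{L^2(\Omega)}^2+s^{-2}\|u\|_{L^2(\Omega)}^2\bigr),
\end{align*}
with $A'$ depending on $n,p,r,H,K$. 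Finally, the interior rolling $r$-ball condition and the bounds on $\II$ and $\Sec$ in $T(\partial\Omega,r)$ force a uniform half-ball volume comparison $\Vol_\Omega(B(x,s))\ge c\,\Vol(B(x,s))$ for all $x\in\Omega$ and $s\le r$ with $c=c(n,r,H,K)>0$, so the ambient volume factor may be replaced by $\Vol_\Omega(B(x,s))$, producing a genuinely localized Neumann Sobolev inequality on $\Omega$ with the scaling required by the theorem.

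Armed with these local Sobolev inequalities, I would apply the standard Grigor'yan--Saloff-Coste machinery (in the same spirit as Choulli--Kayser--Ouhabaz for the boundaryless case) via a Moser iteration or a Davies $\e^{-\psi}$--perturbation argument to derive $h^\Omega_t(x,x)\le C/\Vol_\Omega(B(x,\sqrt t))$ for $0<\sqrt t\le \min(R/2,r)$. For $\sqrt t>\min(R/2,r)$, compactness of $\Omega$ together with $\diam\Omega\le R/2$ renders $\Vol_\Omega(B(x,\sqrt t))$ comparable to $\Vol(\Omega)$, while the Neumann semigroup decomposes into the constant projection $\Vol(\Omega)^{-1}$ plus a term that decays exponentially in the first positive Neumann eigenvalue of $\Omega$; a lower bound on the latter (quantitative from the same Sobolev inequality) closes the bound in this regime.

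The main obstacles will be: (i) tracking all constant dependencies through the cutoff--extension chain so that the final $C$ depends only on $n,p,r,R,H,K$; (ii) proving the uniform volume comparison $\Vol_\Omega(B(x,s))\ge c\,\Vol(B(x,s))$ by genuinely exploiting the Fermi parametrization from Theorem~\ref{main1} and the two-sided bound on $\II$ near $\partial\Omega$, since near the boundary $\Omega$ only occupies roughly a half-ball; and (iii) interfacing the short-time regime, where the localized Sobolev inequality drives the estimate, with the long-time regime, where the spectral gap takes over, without losing the explicit Gaussian form of the constants.
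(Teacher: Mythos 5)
Your strategy is sound and shares the paper's central idea---use the extension operator of Theorem~\ref{main1} to transport a localized functional inequality from $M$ down to $\Omega$ with constants controlled by $(n,p,r,R,H,K)$---but the route differs in three substantive ways. First, the paper works with the family of Gagliardo--Nirenberg inequalities $\Vert v_s^{1/2-1/q}f\Vert_q\le C(\Vert f\Vert_2+s\Vert\Delta^{1/2}f\Vert_2)$ for $q$ with $(q-2)n/q<2$, derived from local volume doubling and the local heat kernel upper bound on $M$ (Petersen--Wei, Dai--Wei--Zhang, Rose) via a localized version of Boutayeb--Coulhon--Sikora, rather than the $L^{2n/(n-2)}$ Sobolev inequality; note that your exponent degenerates at $n=2$, which the theorem covers, so you would need a finite $q$ there anyway. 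Second, and this is the most interesting divergence: because the volume weight sits on the \emph{left} of the GN inequality with nonnegative power $1/2-1/q$, the paper only needs the trivial inequality $\Vol_\Omega(B(x,s))\le\Vol(B(x,s))$ to pass to the weight $\Vol_\Omega$, and then cites the doubling of $\Vol_\Omega$ itself from~\cite{Olive-19}; your route, with the volume in the denominator on the right, forces you to prove the harder lower comparison $\Vol_\Omega(B(x,s))\ge c\,\Vol(B(x,s))$, which your obstacle (ii) correctly flags. That comparison is true here (for $s\le r$ via the Fermi parametrization and the Jacobian bounds of Proposition~\ref{prop:volumecomparison}, and for $r\le s\le R$ by combining with doubling at the cost of a factor $(r/R)^n$), but it is extra work the paper's formulation sidesteps. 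Third, for the final step the paper feeds the global-in-$s$ GN inequality and the $(D_v')$ condition into \cite[Theorem~1.1]{BoutayebCoulhonSikora-15} (using that $\Vol_\Omega(B(x,s))=\Vol(\Omega)$ for $s\ge\diam\Omega$ makes both hypotheses global for free), whereas you propose Moser iteration for small times plus a spectral-gap argument for large times; the latter works but requires a quantitative lower bound on $\eta_1^\Omega$ as an \emph{input}, which in the paper appears only as a \emph{consequence} (Corollary~\ref{cor:kato})---you would need to extract a crude Faber--Krahn-type bound from your Sobolev inequality first to avoid circularity. In short: both approaches buy the theorem, yours being more hands-on and self-contained at the price of the volume comparison and the two-regime matching, the paper's being shorter by outsourcing the iteration machinery to BCS and choosing the functional inequality so that the only volume comparison needed points in the easy direction.
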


We prove this theorem via a careful analysis of the results obtained in~\cite{ChoulliKayserOuhabaz-15, BoutayebCoulhonSikora-15} and Theorem~\ref{main1}. 

Theorem~\ref{main2} provides a tool to prove quantitative gradient estimates of type~\eqref{gradientestimate} for integral Ricci curvature assumptions. Moreover, such estimates give the opportunity to provide quantitative lower bounds on the first (non-zero) Neumann eigenvalue of $(r,H,K)$-regular subsets. Recently, the third author and G.~Wei obtained in~\cite{RoseWei-20} gradient and Neumann eigenvalue estimates assuming only the interior rolling $r$-ball condition, a lower bound on the second fundamental form, and a Kato-type condition on the negative part of the Ricci curvature defined by the Neumann heat semigroup. It is known that the latter condition is more general than assuming $\kappa_M(p,R)$ is small. We refer to~\cite{Rose-16a, Rose-19, Carron-16, CarronRose-18, RoseWei-20,RoseStollmann-18} for more information about Kato-type curvature assumptions. While those results are very general, it is hard to check that the assumptions are indeed satisfied for compact manifolds with boundary and small $\kappa_M(p,R)$. However, Theorem~\ref{main2} gives such an opportunity for relatively compact subdomains, as the following corollary shows.
\begin{cor}\label{cor:kato}
Let $M$ be a complete Riemannian manifold of dimension $n\geq 2$, $p>n/2$, $R>0$, and $K,H\geq 0$. 
There exists an explicitly computable $r_0=r_0(K,H)>0$ 
such that for any $r\in(0,r_0]$ there are explicitly computable constants $C_1=C_1(n,p,r,R,H,K)>0$, $C_2=C_2(n,p,r,R,H,K)>0$, $\epsilon=\epsilon(n,p,r,H,K)>0$ and a function $J=J_{n,p,r,R,H,K}\colon (0,\infty)\to(0,\infty)$ such that if 
\[
R^2\kappa_M(p,R)\leq \epsilon,
\]
then for any $(r,H,K)$-regular domain $\Omega\subset M$ with $\diam \Omega\leq R/2$, any positive solution of~\eqref{heatequation} with Neumann boundary conditions satisfies
\begin{align*}
J(t)\vert \nabla \ln u\vert^2-\partial_t \ln u\leq C_1+\frac{C_2}{tJ(t)}, \quad t>0.
\end{align*}
Moreover, there is an explicitly computable constant $C_3=C_3(n,p,r,R,H,K)>0$ such that if $\eta_1^\Omega$ denotes the first (non-zero) Neumann eigenvalue of $\Omega$, we have 
\[
\eta_1^\Omega\geq C_3 \diam(\Omega)^{-2}.
\]
\end{cor}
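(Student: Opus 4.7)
The plan is to reduce Corollary~\ref{cor:kato} to the Neumann gradient and eigenvalue estimates of~\cite{RoseWei-20}, whose hypotheses are phrased in terms of a Kato-type smallness condition on $\rho_-$ defined via the Neumann heat semigroup. Once that Kato condition is verified uniformly over the class of $(r,H,K)$-regular $\Omega$ with $\diam \Omega \leq R/2$, both advertised conclusions follow directly from~\cite{RoseWei-20}.

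The central step is therefore to translate the assumption $R^2\kappa_M(p,R)\le \epsilon$ into smallness of
\[
\kt(\Omega):=\sup_{x\in\Omega}\int_0^T \bigl(\e^{-s\Delta^N_\Omega}\rho_-\bigr)(x)\,\drm s
\]
for an appropriate $T=T(R,r,H,K)$. Writing the semigroup through the Neumann heat kernel and applying Hölder's inequality gives
\[
\bigl(\e^{-s\Delta^N_\Omega}\rho_-\bigr)(x) = \int_\Omega h^\Omega_s(x,y)\rho_-(y)\,\dvol(y) \leq \|h^\Omega_s(x,\cdot)\|_{L^{p'}(\Omega)}\,\|\rho_-\|_{L^p(\Omega)},
\]
where $p'=p/(p-1)$. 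I would bound the $L^{p'}$-norm of the kernel by interpolating between the Markov bound $\|h^\Omega_s(x,\cdot)\|_1 \leq 1$ and the off-diagonal bound $h^\Omega_s(x,y)\leq h^\Omega_s(x,x)^{1/2}h^\Omega_s(y,y)^{1/2}$, whose diagonal factors are controlled by Theorem~\ref{main2}. The interior rolling $r$-ball condition yields a uniform lower bound on $\Vol_\Omega(B(z,\sqrt s))$ for $z\in\Omega$, and the inclusion $\Omega\subset B(x,R)$ combined with the definition~\eqref{def:lpricci} bounds $\|\rho_-\|_{L^p(\Omega)}$ in terms of $\epsilon$ and $R$. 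Integrating in $s\in(0,T]$ then yields $\kt(\Omega)\leq C(n,p,r,R,H,K)\,\epsilon$, which can be made arbitrarily small by shrinking $\epsilon$.

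With Kato smallness in hand, the gradient estimate of~\cite{RoseWei-20} applies verbatim and produces the function $J=J_{n,p,r,R,H,K}$ together with the constants $C_1, C_2$ appearing in the statement. The eigenvalue bound likewise follows from~\cite{RoseWei-20}: one applies the gradient estimate to the positive heat solution $u(t,x)=c+\e^{-\eta_1^\Omega t}\varphi_1(x)$ built from a first nontrivial Neumann eigenfunction $\varphi_1$ with $c>\|\varphi_1\|_\infty$, integrates along a minimal geodesic between extremal points of $\varphi_1$ to obtain a Harnack-type comparison, and optimizes in $t$ to force $\eta_1^\Omega \geq C_3\diam(\Omega)^{-2}$. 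The main technical obstacle is the quantitative bookkeeping in the first step: all constants from Theorem~\ref{main2} together with the volume lower bound produced by the interior rolling $r$-ball condition must be tracked carefully so that the resulting threshold $\epsilon$ depends only on $(n,p,r,H,K)$ and not on the individual domain $\Omega$.
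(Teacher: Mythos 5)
Your proposal is correct and follows essentially the same route as the paper: reduce both conclusions to the Kato-type condition of~\cite{RoseWei-20}, and verify that condition by combining the on-diagonal Neumann heat kernel bound of Theorem~\ref{main2} (plus volume doubling on $\Omega$) with H\"older/interpolation to get $\Vert H_t^\Omega\Vert_{p,\infty}\lesssim t^{-n/2p}$, which is integrable near $t=0$ precisely because $p>n/2$. The paper phrases the interpolation via Dunford--Pettis and Riesz--Thorin on the operator level rather than interpolating the $L^{p'}$-norm of $h_t^\Omega(x,\cdot)$ between its $L^1$ and $L^\infty$ bounds, but these are equivalent computations.
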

\begin{remark}\label{rmk:Rsmall}
As pointed out in~\cite{Chen-90, Wang-97} the assumption that the interior rolling $r$-ball condition holds for $r>0$ small enough in Corollary~\ref{cor:kato} depends implicitly on upper bounds on the sectional curvature in $T(\partial\Omega,r)\cap\Omega$ and the lower bound on the second fundamental form $\II\geq -H$ w.r.t.\ the inward pointing normal. 
More precisely, $r\in(0,1)$ has to be chosen such that 
\[
\sqrt{ K}\tan\left(r\sqrt{ K }\right)\leq\frac 12(1+H)\quad\text{and}\quad \frac{H}{\sqrt{ K}}\tan\left(r\sqrt{K}\right)\leq\frac 12.
\]
Thus, the restriction on the sectional curvature in the tubular neighborhood is a natural assumption. 
\end{remark}

\subsection*{Structure of the article}
The plan of this paper is as follows. In Section~\ref{section:extensions}
we construct an extension operator using Jacobi-field techniques; this construction depends on geometric assumptions that may or may not be satisfied for a given manifold. In Section~\ref{section:tubular}, we provide sufficient geometric criteria for the tubular neighborhood that imply those conditions and prove Theorem~\ref{main1}. Some proofs of auxiliary comparison estimates for hypersurfaces that we used in the proof can be found in the appendix. In Section~\ref{section:heatkernel}, we carefully adapt the main results of~\cite{BoutayebCoulhonSikora-15, ChoulliKayserOuhabaz-15} to our setting and prove Theorem~\ref{main2}. At the end, we provide proofs of Corollary~\ref{cor:kato} by showing that the Kato condition is indeed satisfied in our situation.\\

\noindent\textit{Acknowledgement:} We want to thank Leonhard Frerick, Rostislav Matveev, and J\"urgen Jost for useful remarks. C.R. wants to thank O.P. and the University of Trier for their hospitality.
\section{Quantitative Sobolev extensions}\label{section:extensions}
The existence of an extension operator $E_\Omega\colon H^1(\Omega)\to H^1(M)$ with bounded operator norm will follow by constructing an extension operator along geodesics perpendicular to $\partial\Omega$ with bounded operator norm depending on the behavior of the geodesics in a tubular neighborhood. $E_\Omega$ can then be defined on $M$ via Fermi-coordinates on $\partial\Omega$. The operator norm will then be controlled by the behavior of the volume element and the geodesics in the tubular neighborhood as explained in Section~\ref{section:tubular}.\\
Let $M=(M^n,g)$ be a Riemannian manifold (not necessarily complete) with distance function $d$. Fix an open set $\Omega\subset M$ such that $\overline \Omega\neq M$ is a smooth manifold with smooth boundary $\partial\Omega$, and denote by $\nu$ the outward normal of $\partial\Omega$. 
Moreover, let $\dist(\cdot,\partial\Omega)$ be the distance function to $\partial\Omega$. For $r>0$, denote by $T(\partial\Omega,r)$ the $r$- tubular neighborhood of $\partial\Omega$, i.e.,
\begin{equation*}
  T(\partial\Omega,r):=\{x\in M\colon \dist(x,\partial\Omega)< r\}.
\end{equation*}
The set $T(\partial\Omega,r)$ can be parametrized by the distance function to $\partial\Omega$: 
Set
\begin{align}
  \psi\colon (-r,r)\times \partial \Omega \to M, (s,x)\mapsto \psi(s,x)= \exp_x (\nu s).
\end{align}
Then $\psi$ satisfies for all $x\in\partial\Omega$
\[
d(\psi(s,x),x)=\begin{cases}s,& \text{if $\psi(s,x)\in M \setminus\Omega$},\\
  -s, & \text{if $\psi(s,x)\in \Omega$}.\end{cases}
\]
The parametrization $\psi$ is defined only for $r$ small enough; more precisely, the size of $r$ depends on the focal set of $\partial\Omega$, that is, the set of points where the exponential map is non-regular, see Section~\ref{section:tubular}. We assume throughout this section that the focal set is beyond the distance $r$. 
In particular, there are neither focal nor cut points along any geodesic perpendicular to $\partial\Omega$. We postpone the discussion of the choice of $r$ to the next section.

Using $\psi$, the metric $g$ of $M$ decomposes into 
\[
g=ds^2+g_s, \quad s\in(-r,r),
\]
where $g_s$ is the metric of the distance hypersurface $\psi^{-1}(s,\cdot)$. We further abbreviate for $x\in T(\partial\Omega,r)$ and $s\in(-r,r)$
\[
\vert\cdot\vert_x:=\vert\cdot\vert_{g(x)}\quad\text{and}\quad \vert\cdot\vert_{s,x}:=\vert\cdot\vert_{g_s(x)}.
\]

The volume form $\dvol$ of $M$ decomposes accordingly into
\[
\dvol= ds\wedge \dvol_s,\quad s\in(-r,r),
\]
where $\dvol_s$ denotes the volume element of the distance hypersurface $\psi^{-1}(s,\cdot)$.\\

Let $U\subset M$ be open. Denote by 
$\Vert\cdot\Vert_{H^1(U)}$ the $H^1$-norm in $U$, i.e., for $u\in\cC^1(U)$, 
\[
\Vert u\Vert_{H^1(U)}^2:=\Vert u\Vert_{L^2(U)}^2+\Vert \nabla u\Vert_{L^2(U)}^2.
\]
Moreover, we denote by $\cC^1(\overline U)$ the set of all $f\in\cC^1(U)$ with continuous zeroth and first derivatives up to the boundary of $U$.\\
Throughout this section, we impose several geometric assumptions that may or may not be satisfied:
\begin{enumerate}[label=(A\arabic*)]
\item\label{ass1}There are no focal points along any geodesic $\gamma\colon(-r,r)\to M$ with $\gamma(0)\in\partial\Omega$ and $\gamma'(0)=\nu$, i.e., 
\[
\exp_x t\nu,
\]
is non-singular for any $t\in (-r,r)$. Moreover, any $\gamma$ does not hit $\partial\Omega$ twice.
\item\label{ass2} There exists $G>0$ such that for any $x\in \partial\Omega$, there exist cut-off functions $\phi_x\in \cC_{\mathrm{c}}^\infty(M)$ such that $0\leq \phi\leq 1$, $\supp\phi \subset B(x,r)$, $\phi=1$ on $B(x,r/2)$, and
\[
\vert\nabla \phi_x\vert\leq G/r.
\]
\item\label{ass3}There are functions $d,D\colon [0,r)\to (0,\infty)$ such that the volume elements $\dvol_s$ of the hypersurfaces $\psi^{-1}(s,\cdot)$ along $\gamma\colon(-r,r)\to M$, $\gamma(0)\in\partial\Omega$, $\gamma'(0)=\nu$ satisfy
\begin{align}\label{assumption2}
d(s)\dvol_s\leq \dvol_0,\quad s\in [0,r),
\end{align}
and 
\begin{align}\label{assumption22}
\dvol_0\leq D(s)\dvol_{-s},\quad s\in [0,r).
\end{align}
\item\label{ass4} the exterior and interior rolling $r$-ball condition is satisfied, cf.~Definition~\ref{defrhk}~\ref{exterior}--\ref{interior}.
\end{enumerate}
In the next section, we discuss curvature restrictions under which our assumptions are satisfied.
\begin{example}
  \label{ex:sphere}
  If $\Omega=B(0,R_0)$ is an open ball in $\RR^n$ then the above conditions are fulfilled with $r=r_0=R_0/2$ 
and hence geodesics are given by $\gamma(t)=(t-R_0) \nu$ at $x \in \bd \Omega$ with $\nu=x/\Vert x \Vert$ and there is no focal point provided $\vert t \vert < r$.  Furthermore, we can choose
  \begin{equation*}
    d(s)=\Bigl(\frac{R_0}{R_0+s}\Bigr)^{n-1} 
    \qquad\text{and}\qquad
    D(s)=\Bigl(\frac{R_0}{R_0-s}\Bigr)^{n-1} .
  \end{equation*}
  Moreover, the cut-off functions $\phi_x$ can be chosen such that $G=8$.
\end{example}

For $x\in\partial\Omega$, let
\begin{equation*}
  \gamma=\gamma_x\colon (-r,r)\to M, \quad s\mapsto \exp_x(s\nu)
\end{equation*}
be the unique geodesic perpendicular to $\partial\Omega$ at $x$, and $\varphi_x$ be a cut-off function as in \ref{ass2}. Moreover, let $u\in\cC^1(\overline\Omega)$. We define the one-dimensional extension of $u$ along $\gamma$ by
\begin{align*}
(E_x u)(s):=
\begin{cases}u(\gamma(s)),& \colon s\in(-r,0],\\
 \left(-3u(\gamma(-s))+4 u(\gamma(-s/2))\right)\varphi_{x}(\gamma(s)),& 
 \colon s\in (0,r).\end{cases}
\end{align*}

\begin{prop}\label{prop:onedim}$E_x u$ is continuously differentiable along $\gamma$. Moreover, if \ref{ass1} and \ref{ass2} are satisfied, we have 
\begin{align}
\Vert E_xu\Vert^2_{H^1(\gamma\cap(M\setminus\Omega))}&\leq 164\Vert \nabla u\Vert^2_{L^2(\gamma\cap\Omega)}
+(82+164G^2r^{-2})\Vert u\Vert^2_{L^2(\gamma\cap\Omega)}.
\end{align}
\end{prop}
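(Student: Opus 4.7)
\medskip
\noindent\textbf{Proof plan for Proposition~\ref{prop:onedim}.} The first thing I would do is explain where the coefficients $-3$ and $4$ come from and verify the $\mathcal{C}^1$ matching at $s=0$. Write $f(s) = u(\gamma(s))$, defined for $s\in(-r,0]$ since $u \in \mathcal{C}^1(\overline{\Omega})$, and set $F(s) := -3 f(-s) + 4 f(-s/2)$ for $s\in(0,r)$. At $s=0^+$ we have $F(0) = (-3+4) f(0) = f(0) = u(\gamma(0))$, and since $\varphi_x \equiv 1$ on $B(x, r/2)$ and $\gamma(0) = x$, the factor $\varphi_x(\gamma(0)) = 1$. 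So the extension matches $u$ at $s=0$. For the derivative, $F'(s) = 3 f'(-s) - 2 f'(-s/2)$, giving $F'(0) = (3-2) f'(0) = f'(0)$; moreover $(\varphi_x\circ\gamma)'(0) = 0$ because $\varphi_x$ is constant on $B(x,r/2)$. Hence the product rule yields $(E_x u)'(0^+) = f'(0) = (E_xu)'(0^-)$, and the extension is continuously differentiable across $s=0$. Assumption~\ref{ass1} is used here to guarantee that $\gamma$ is a well-defined embedded geodesic on $(-r,r)$ and that the points $\gamma(-s)$, $\gamma(-s/2)$ lie in $\overline{\Omega}$, so that $f(-s)$ and $f(-s/2)$ make sense.

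The next step is the $L^2$-bound on $(0,r)$. Using $|\varphi_x| \le 1$ and the elementary inequality $(a+b)^2 \le 2 a^2 + 2 b^2$ applied to $F$, we get
\[
|E_x u(s)|^2 \;\le\; F(s)^2 \;\le\; 18\, f(-s)^2 + 32\, f(-s/2)^2.
\]
Integrating on $(0,r)$ and performing the substitutions $t = -s$ and $t = -s/2$ on the two terms (the second contributing a Jacobian factor $2$, integrated over $(-r/2,0) \subset (-r,0)$), one obtains
\[
\int_0^r |E_x u(s)|^2\, ds \;\le\; 18 \int_{-r}^0 f(t)^2\, dt + 64 \int_{-r}^0 f(t)^2\, dt \;\le\; 82 \, \|u\|^2_{L^2(\gamma \cap \Omega)}.
\]

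The third step is the derivative bound. Applying the product rule and $(a+b)^2 \le 2a^2+2b^2$, together with $|\varphi_x| \le 1$ and Assumption~\ref{ass2} giving $|(\varphi_x\circ\gamma)'(s)| \le G/r$, I would estimate
\[
\bigl|(E_x u)'(s)\bigr|^2 \;\le\; 2\, F'(s)^2 + 2\, (G/r)^2 F(s)^2.
\]
The same $(a+b)^2 \le 2 a^2 + 2 b^2$ trick applied to $F' = 3f'(-s) - 2 f'(-s/2)$ gives $F'(s)^2 \le 18 f'(-s)^2 + 8 f'(-s/2)^2$; substituting and bounding as above yields $\int_0^r F'(s)^2\, ds \le 34 \|\nabla u\|^2_{L^2(\gamma\cap\Omega)}$, while the already-computed bound for $\int F^2$ contributes $164 (G/r)^2 \|u\|^2_{L^2(\gamma\cap\Omega)}$. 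Summing the $L^2$ and derivative contributions gives exactly the stated inequality (perhaps after overbounding the derivative coefficient by $164$ to match the other constant in the statement).

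The computation itself is routine once the reflection formula is in place; the only place that genuinely uses geometry is invoking~\ref{ass1} to guarantee that $\gamma$ parametrizes a tubular segment without self-intersection or focal points, and~\ref{ass2} to control the cut-off. The mild subtlety worth emphasizing is that the coefficients $-3, 4$ (together with the arguments $-s$ and $-s/2$) are the unique pair enforcing simultaneous matching of value and first derivative, which is precisely what makes $E_x u \in \mathcal{C}^1$ and allows the $H^1$-estimate to be carried out integrand-wise along $\gamma$ without any boundary contribution at $s=0$.
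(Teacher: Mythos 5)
Your $\mathcal{C}^1$-matching analysis and the $L^2$ bound are correct and coincide with the paper's (the coefficients $-3,4$, the substitutions $t=-s$, $t=-s/2$, and the resulting constant $82$ all match). The gap is in the gradient term: you bound only $|(E_xu)'(s)|$, the derivative of the extension \emph{along} $\gamma$, i.e.\ the normal derivative $\langle\nabla E_xu,\gamma'\rangle$. But the quantity $\Vert\nabla E_xu\Vert^2_{L^2(\gamma\cap(M\setminus\Omega))}$ in the proposition is $\int_0^r|\nabla E_xu|^2_{\gamma(s)}\,\drm s$ with the \emph{full} spatial gradient of $E_\Omega u$ evaluated along $\gamma$, and correspondingly $\Vert\nabla u\Vert^2_{L^2(\gamma\cap\Omega)}$ on the right is the full gradient of $u$. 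This is forced by the way the proposition is used in Theorem~\ref{thm:medi}: there one writes $\Vert\nabla E_\Omega u\Vert^2_{L^2(T(\partial\Omega,r)\setminus\Omega)}$ via Fubini as $\int_0^r\int_{\partial\Omega}|\nabla E_\theta u(s)|^2_{\gamma(s)}\,\dvol_s\,\drm s$ and then invokes the proposition fiberwise; with only the normal derivative the argument would not close, since integrating $|\langle\nabla u,\gamma'\rangle|^2$ over the fibers does not recover $\Vert\nabla u\Vert^2_{L^2(\Omega\cap T(\partial\Omega,r))}$.

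Concretely, what is missing is the control of the derivatives of $E_xu$ in the directions tangent to the distance hypersurfaces. Since $E_xu(\psi(s,y))$ is built from values of $u$ at $\psi(-s,y)$ and $\psi(-s/2,y)$, differentiating in a tangential direction requires differentiating through the normal exponential map. The paper does this by introducing the variations $\gamma_i(t,s)=\exp_{x_i(t)}(s\nu)$ with $x_i'(0)=e_i$, whose variation fields $X_i$ are $\partial\Omega$-Jacobi fields; one then computes $\langle X_i,\nabla E_xu\rangle_{\gamma(s)}$ in terms of $\langle\nabla u,X_i\rangle_{\gamma(-s)}$, $\langle\nabla u,X_i\rangle_{\gamma(-s/2)}$ and $\langle\nabla\varphi_x,X_i\rangle_{\gamma(s)}$, and uses that $\{\nu,X_2,\dots,X_n\}$ spans $T_{\gamma(s)}M$ to assemble $|\nabla E_xu|^2_{\gamma(s)}\le 36|\nabla u|^2_{\gamma(-s)}+64|\nabla u|^2_{\gamma(-s/2)}+(\dots)|\nabla\varphi_x|^2$. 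This is also where Assumption~\ref{ass1} enters in an essential way: the absence of focal points is what guarantees (via Warner's theorem) that the Jacobi fields $X_i$ remain a frame on all of $(-r,r)$, not merely that $\gamma$ is embedded. Your constant $68$ for the derivative term is an artifact of having dropped the tangential contributions; the paper's $164$ comes from $36+128$ after carrying the full gradient through the substitution. You would need to add this Jacobi-field computation (and the corresponding right limits $\langle X_i,\nabla E_xu\rangle_{\gamma(0+)}=\partial_iu(x)$ for the $\mathcal{C}^1$ claim) to have a complete proof.
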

\begin{proof} The continuity of $E_x u$ is obvious. Moreover, we can restrict to the case $s\geq 0$ since $E_x u$ coincides with $u$ for $s<0$. We compute the gradient of $E_xu$ by calculating its directional derivatives. The regularity of our parametrization allows to define a variation of $E_xu$ along $\gamma$ such that we can compute the partial derivatives in directions perpendicular to $\gamma$. The difficulty here is that we cannot use just a parallel frame along $\gamma$ to compute the gradient because we do not know that curves with initial tangent vectors given by the frame in small neighborhoods of the point under consideration do not intersect. Thus, we need to define an appropriate frame consisting of Jacobi fields whose curves lie in the distance hypersurfaces and that span the tangent spaces along $\gamma$. The directional derivatives in the directions given by the Jacobi fields will be obviously continuous and the gradient therefore exists.\\
For $\gamma=\gamma_x$ as above, we get for free
\begin{align}\label{partialcalc1}
\partial_s E_x u(s)&= \left(3(\partial_s u)(\gamma(-s))-2(\partial_s u)(\gamma(-s/2))\right)\varphi_{x}(\gamma(s))\\
&\quad+\left(-3u(\gamma(-s))+4 u(\gamma(-s/2))\right)(\partial_s\varphi_{x})(\gamma(s)).
\end{align}

To compute the partial derivatives perpendicular to $\gamma$, we introduce the following variation of $\gamma$:
Let $e_i\in T_xM$, $i=2,\ldots,n$, be a completion of $\nu$ to a basis. For $i \in \{2,\ldots,n\}$ let $x_i(t)$ be a curve in $\partial\Omega$ such that $x_i(0)=x$, $x'(0)=e_i$. We define the variation
\[
\gamma_i(t,s)=\exp_{x_i(t)} (s\nu), \quad t\in(-\epsilon,\epsilon), \ s\in(-r,r).
\]
Varying $t$ gives a variation through geodesics emanating perpendicularly from $\partial\Omega$. In particular, for fixed $s$, the curves $\gamma_i(\cdot,s)$ lie in the distance hypersurface with distance $s$.
The vector field
\[
X_i(s):= \frac{\partial}{\partial t} \gamma_i(t,s)\mid_{t=0}
\]
is a Jacobi field along $\gamma$ and satisfies $X_i(0)=e_i$, $X_i'(0)=S(e_i,\nu)$, where $S$ denotes the shape operator of $\partial\Omega$. In particular, we have 
\[
\langle X_i,\gamma'\rangle_{\gamma(s)}=0,\quad X_i(0)\in T_x\Omega, \quad S(X_i(0),\nu)-X'(0)=0\perp T_x\partial\Omega,
\]
so $X_i$ is a $\partial\Omega$-Jacobi field along $\gamma$ for any $i \in \{2,\ldots,n\}$. 
According to~\cite{Warner-66}, $\{\nu(s)\}\cup\{X_i(s)\}_{i=2}^{n}$ spans $T_{\gamma(s)}M$.
For $i \in \{2,\dots,n\}$, define the following variation of $Eu$:
\begin{align}
  E_x^i u(t,s)
  :=\begin{cases} u(\gamma_i(t,s)),& \colon t\in(-\epsilon,\epsilon), s\in(-r,0],\\ 
\left(-3u(\gamma_i(t,-s))+4 u(\gamma_i(t,-s/2))\right)\varphi_{x}(\gamma_i(t,s)), & \colon t\in(-\epsilon,\epsilon),s\in (0,r).
\end{cases}
\end{align}
Then, for the directional derivative of $Eu(s)$ in direction $X_i(s)$, we have 
\begin{align}\label{partialcalc2}
\langle X_i,\nabla E_x u\rangle_{\gamma(s)}&= \frac{\mathrm{d}}{\mathrm{d}t} E_x^i u(t,s)\mid_{t=0}\nonumber\\
&= \frac{\mathrm{d}}{\mathrm{d}t} \left(-3u(\gamma_i(t,-s))+4 u(\gamma_i(t,-s/2))\right) \ \varphi_{x}(\gamma_i(t,s))\nonumber\\
&\quad + \left(-3u(\gamma_i(t,-s))+4 u(\gamma_i(t,-s/2))\right) \ \frac{\mathrm{d}}{\mathrm{d}t}\varphi_{x}(\gamma_i(t,s))\mid_{t=0}\nonumber\\
&= \left(-3\langle \nabla u ,\frac{\partial\gamma_i}{\partial t}\rangle_{\gamma_i(t,-s)}+4 \langle \nabla u,\frac{\partial\gamma_i}{\partial t}\rangle_{\gamma_i(t,-s/2)}\right) \ 
\varphi_{x}(\gamma_i(t,s))\nonumber\\
&\quad + \left(-3u(\gamma_i(t,-s))+4 u(\gamma_i(t,-s/2))\right) \langle \nabla\varphi_{x},\frac{\partial\gamma_i}{\partial t}\rangle_{\gamma_i(t,s)}\mid_{t=0}\nonumber\\
&= \left(-3\langle \nabla u,\frac{\partial\gamma_i}{\partial t}\mid_{t=0}\rangle_{\gamma(-s)}+4 \langle \nabla u,\frac{\partial\gamma_i}{\partial t}\mid_{t=0}\rangle_{\gamma(-s/2)}\right) \ 
\varphi_{x}(\gamma(s))\nonumber\\
&\quad + \left(-3u(\gamma(-s))+4 u(\gamma(-s/2))\right) \langle\nabla\varphi_{x},\frac{\partial \gamma_i}{\partial t}\mid_{t=0}\rangle_{\gamma(s)}\nonumber\\
&= \left(-3\langle \nabla u,X_i\rangle_{\gamma(-s)}+4 \langle \nabla u,X_i\rangle_{\gamma(-s/2)}\right) \ 
\varphi_{x}(\gamma(s))\nonumber\\
&\quad + \left(-3u(\gamma(-s))+4 u(\gamma(-s/2))\right) \langle\nabla\varphi_{x},X_i\rangle_{\gamma(s)}.
\end{align}

In particular, we have for the right limits 
\begin{align*}
\partial_s E_x u(0+)&= 3\ \partial_s u(\gamma(0))-2\ \partial_s u(\gamma(0))=\partial_s u(\gamma(0))=\partial_s u(x),\\[1.5ex]
\langle X_i, \nabla E_x u\rangle_{\gamma(0+)}&= -3\partial_i u(\gamma(0))+4\partial_i u(\gamma(0))=\partial_i u(x).
\end{align*}

It is easily seen that for the pointwise norm $E_xu$ along $\gamma$, for $s\geq 0$ we have
\begin{align}\label{l2ptw}
\vert E_x u(s)\vert^2&\leq 18 \vert u(\gamma(-s))\vert^2+32 \vert u(\gamma(-s/2))\vert^2.
\end{align}

The pointwise norm of $\nabla E_x u$ along $\gamma$ is more intricate. Since $E_x u$ coincides with $u$ in $\Omega$, we restrict the computations for the norm to $M\setminus\Omega$. 

Denote $X_1=\nu$. For any $s\in[0,r)$, we have
\begin{align}
\nabla E_xu(s)=\sum_{i=1}^n \langle \nabla E_xu,X_i\rangle_{\gamma(s)}\frac{X_i}{\vert X_i\vert_{\gamma(s)}^2}
\end{align}
and therefore
\begin{align*}
\vert\nabla E_xu\vert_{\gamma(s)}^2&=\sum_{i=1}^n \langle \nabla E_xu,X_i\rangle_{\gamma(s)}^2\\
&=\vert \partial_sE_xu(s)\vert^2+\sum_{i=2}^{n}\langle \nabla E_xu,X_i\rangle_{\gamma(s)}^2\\
&\leq \big[3\vert\partial_s u(\gamma(-s))\vert+2\vert\partial_s u(\gamma(-s/2))\vert \\
&\quad+\left(3 u(\gamma(-s))+4 u(\gamma(-s/2))\right)\vert\partial_s\varphi_{x}(\gamma(s))\vert \big]^2\\
&\quad+\sum_{i=2}^{n}\left[ \left(-3\langle \nabla u,X_i\rangle_{\gamma(-s)}+4 \langle \nabla u,X_i\rangle_{\gamma(-s/2)}\right) \ 
\varphi_{x}(\gamma(s))\right.\nonumber\\
&\quad +\left. \left(-3u(\gamma(-s))+4 u(\gamma(-s/2))\right) \langle\nabla\varphi_{x},X_i\rangle_{\gamma(s)}\right]^2\\
&\leq36\vert\partial_s u(\gamma(-s))\vert^2+16\vert\partial_s u(\gamma(-s/2))\vert^2\\
&\quad+\left(36 u(\gamma(-s))^2+64 u(\gamma(-s/2))^2\right)\vert\partial_s\varphi_{x}(\gamma(s))\vert^2\\
&\quad+\sum_{i=2}^{n}36\langle \nabla u,X_i\rangle_{\gamma(-s)}^2+64 \langle \nabla u,X_i\rangle_{\gamma(-s/2)}^2 \nonumber\\
&\quad + \sum_{i=2}^{n}(36u(\gamma(-s))^2+64 u(\gamma(-s/2))^2) \langle\nabla\varphi_{x},X_i\rangle_{\gamma(s)}^2
\end{align*}
Since we are assuming that there are no focal points along $\gamma$ on the interval $(-r,r)$, as already pointed out above, the $X_i$ and $\nu$ span $T_{\gamma(s)}M$ for all $s\in(-r,r)$. Hence,
\begin{align*}
\vert\nabla E_xu\vert_{\gamma(s)}^2&\leq 36\vert\nabla u\vert_{\gamma(-s)}^2+64\vert\nabla u\vert_{\gamma(-s/2)}^2\\
&\quad+\left(36u(\gamma(-s))^2+64u(\gamma(-s/2))^2\right)\vert\nabla\varphi\vert_{\gamma(s)}^2.
\end{align*}
Denote by $\chi_I$ the characteristic function of $I\subset \RR$. Using (A3), we get
\begin{align}\label{normptw}
\vert\nabla E_xu\vert_{\gamma(s)}^2&\leq 36\vert\nabla u\vert_{\gamma(-s)}^2+64\vert\nabla u\vert_{\gamma(-s/2)}^2\nonumber\\
&\quad+\left(36u(\gamma(-s))^2+64u(\gamma(-s/2))^2\right)\chi_{[r/2,r]} G^2r^{-2}.
\end{align}

Now we can compute the $H^1$-norm of $E_xu$.
We already mentioned above that we only care about the norm in the complement of $\Omega$. By~\eqref{l2ptw}, for the $L^2$-norm of $E_xu$, we have
\begin{align}\label{l2geodesic}
\Vert E_x u\Vert^2_{L^2(\gamma\cap (M\setminus\Omega))}
&=\int_0^{r} \vert E_x u(s)\vert^2\drm s\nonumber\\
&\leq
\int_0^{r} 18 \vert u(\gamma(-s))\vert^2+32 \vert u(\gamma(-s/2))\vert^2\drm s\nonumber\\
&\leq \int_{-r}^0 
18\vert u(\gamma(s))\vert^2+32\vert u(\gamma(s/2))\vert^2\drm s\nonumber\\
&= \int_{-r}^0 
18\vert u(\gamma(s))\vert^2\drm s+\int_{-r/2}^0 64\vert u(\gamma(s))\vert^2\drm s\nonumber\\
&\leq 82\int_{-r}^0 \vert u(\gamma(s))\vert^2\drm s\nonumber\\
&=82 \Vert u\Vert_{L^2(\gamma\cap\Omega)}^2
\end{align}
By~\eqref{normptw}, for the $L^2$-norm of $\nabla E_xu$, we have
\begin{align*}
\Vert \nabla E_x u\Vert_{L^2(\gamma\cap (M\setminus\Omega))}^2
&= \int_0^{r} \vert \nabla E_x u(s)\vert_{\gamma(s)}^2\drm s\\
&\leq \int_0^{r}36\vert\nabla u\vert_{\gamma(-s)}^2+64\vert\nabla u\vert_{\gamma(-s/2)}^2\\
&\quad+\left(36u(\gamma(-s))^2+64u(\gamma(-s/2))^2\right)\chi_{[r/2,r]} G^2r^{-2}\drm s\\
&\leq \int_0^{r}36\vert\nabla u\vert_{\gamma(-s)}^2\drm s+\int_0^{r/2}128\vert\nabla u\vert_{\gamma(-s)}^2\drm s\\
&\quad+G^2r^{-2}\left(\int_{r/2}^{r}36u(\gamma(-s))^2\drm s+\int_{r/4}^{r/2}128u(\gamma(-s))^2\drm s\right)\\
&\leq 164\Vert \nabla u\Vert_{L^2(\gamma\cap\Omega)}^2+164G^2r^{-2}\Vert u\Vert_{L^2(\gamma\cap\Omega)}^2.
\end{align*}
Hence, the claim follows.
\end{proof}

We are now going to define the extension operator $E_\Omega$.
Denote by $x'$ the distance minimizing point of $x\in T(\partial \Omega, r)\setminus \overline\Omega$ to $\partial\Omega$. This point always exists and is unique due to our uniquely defined parametrization. For $u\in\cC^1(\overline\Omega)$, we define the extension $E_\Omega u$ by 
\begin{align*}
E_\Omega u(x)
  :=\begin{cases}
    u(x),& \text{if $x\in\overline\Omega$},\\
    E_{x'}u(d(x,x')),& \text{if $x\in T(\partial \Omega, r)\setminus \overline\Omega$}, \\
    0, & \text{otherwise.}
  \end{cases}
\end{align*}

The next theorem shows that this operator is indeed a map $E_\Omega\colon H^1(\Omega)\to H^1(M)$ with bounded operator norm under assumptions (A1), (A2), and (A3).
\begin{thm}\label{thm:medi}
Assume \ref{ass1}, \ref{ass2}, \ref{ass3}, and \ref{ass4} . Then, the operator $E_\Omega$ defined above is linear and continuous from $H^1(\Omega)$ to $H^1(M)$ with operator norm
\begin{align}
\Vert E_\Omega\Vert^2\leq 1+\max_{s,t\in[0,r]}\frac {D(t)}{d(s)}\max(164,82+164G^2r^{-2}).
\end{align}
\end{thm}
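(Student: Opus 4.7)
Since $\partial\Omega$ is smooth, $\cC^1(\overline\Omega)$ is dense in $H^1(\Omega)$, so it suffices to establish the stated norm bound on $\cC^1(\overline\Omega)$ and extend $E_\Omega$ by continuity. The strategy is to compute $\Vert E_\Omega u\Vert_{H^1(M)}$ in Fermi coordinates, apply the one-dimensional estimate of Proposition~\ref{prop:onedim} along each normal geodesic $\gamma_x$, and use the two hypersurface volume comparisons of~\ref{ass3} to transport the result back to an $H^1$-norm of $u$ on the interior.

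Assumptions~\ref{ass1} and~\ref{ass4} guarantee that $\psi\colon(-r,r)\times\partial\Omega\to T(\partial\Omega,r)$ is a diffeomorphism, so the nearest-point map $x\mapsto x'$ and hence $E_\Omega u$ are well defined, and the decomposition $\dvol=\drm s\wedge\dvol_s$ holds throughout the tubular neighborhood. Off $\partial\Omega$, $E_\Omega u$ is $\cC^1$ by construction; the matching computation at the end of the proof of Proposition~\ref{prop:onedim} shows that the one-sided normal and tangential derivatives of $E_\Omega u$ at $s=0+$ reproduce $\partial_\nu u(x)$ and the tangential derivatives of $u$ at $x$. Hence $E_\Omega u$ is globally $\cC^1$ across $\partial\Omega$, and the cut-offs $\varphi_x$ force it to vanish outside $T(\partial\Omega,r)$, so $E_\Omega u\in H^1(M)$ with compact support.

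Split $\Vert E_\Omega u\Vert^2_{H^1(M)}=\Vert u\Vert^2_{H^1(\Omega)}+\Vert E_\Omega u\Vert^2_{H^1(T(\partial\Omega,r)\setminus\overline\Omega)}$ and rewrite the second term via Fermi coordinates as
\[\int_0^r\int_{\partial\Omega}\bigl(\vert E_xu(s)\vert^2+\vert\nabla E_xu\vert^2_{\gamma_x(s)}\bigr)\,\dvol_s(x)\,\drm s.\]
By the first inequality of~\ref{ass3}, $\dvol_s\leq\dvol_0/d(s)$ on $\partial\Omega$, and Fubini, this is at most
\[\max_{s\in[0,r]}\frac{1}{d(s)}\int_{\partial\Omega}\int_0^r\bigl(\vert E_xu(s)\vert^2+\vert\nabla E_xu\vert^2_{\gamma_x(s)}\bigr)\,\drm s\,\dvol_0(x),\]
and Proposition~\ref{prop:onedim} bounds the inner arclength integral, pointwise in $x$, by $\max(164,82+164G^2r^{-2})\int_{-r}^0(\vert u\vert^2+\vert\nabla u\vert^2)(\gamma_x(s))\,\drm s$.

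To turn the resulting $\dvol_0$-integral over $\partial\Omega$ of an interior arclength integral back into an $H^1$-norm on $\Omega$, substitute $s\mapsto -s$, apply Fubini, and use the second inequality of~\ref{ass3}, $\dvol_0\leq D(s)\dvol_{-s}$. Fermi coordinates on the interior then identify the remaining double integral with $\Vert u\Vert^2_{H^1(T(\partial\Omega,r)\cap\Omega)}\leq\Vert u\Vert^2_{H^1(\Omega)}$, up to the factor $\max_{t\in[0,r]}D(t)$. Collecting all constants yields the stated bound $1+\max_{s,t\in[0,r]}(D(t)/d(s))\cdot\max(164,82+164G^2r^{-2})$. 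The main obstacle is the bookkeeping: both directions of~\ref{ass3} are essential---one to transport hypersurface integrals back to $\partial\Omega$ and the other to push $\partial\Omega$-integrals into the interior---and this two-step transfer is legitimate only because~\ref{ass1} and~\ref{ass4} jointly ensure that $\psi$ is a diffeomorphism on $(-r,r)\times\partial\Omega$.
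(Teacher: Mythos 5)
Your proposal is correct and follows essentially the same route as the paper's proof: the same splitting of $\Vert E_\Omega u\Vert^2_{H^1(M)}$, the same use of Fermi coordinates with Proposition~\ref{prop:onedim} applied fiberwise, and the same two-step application of~\ref{ass3} (first $\dvol_s\leq\dvol_0/d(s)$ to pull the integral to $\partial\Omega$, then $\dvol_0\leq D(s)\dvol_{-s}$ to push it into $\Omega$). The only difference is cosmetic: you spell out the $\cC^1$-matching across $\partial\Omega$ and the role of~\ref{ass1}/\ref{ass4} slightly more explicitly than the paper does.
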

\begin{proof} We can restrict our considerations to $u\in\cC^1(\overline\Omega)$ since $\cC^1(\overline\Omega)$ is dense in $H^1(\Omega)$. Assumption \ref{ass4} implies that no geodesics starting in $\partial\Omega$ and emanating perpendicularly meet in the interior or exterior of $\Omega$. To compute the operator norm of $E_\Omega$, note that
\begin{align}\label{sumnorm}
\Vert E_\Omega u\Vert^2_{H^1(M)}= \Vert u\Vert^2_{H^1(\Omega)}+\Vert E_\Omega u\Vert_{H^1(T(\partial\Omega,r)\setminus\Omega)}^2.
\end{align}
Assumption~\eqref{assumption2} on the volume element of the hypersurfaces on the interval $[0,r)$ implies
\begin{align*}
\Vert E_\Omega u\Vert_{H^1(T(\partial\Omega,r)\setminus\Omega)}^2
&=\Vert E_\Omega u\Vert_{L^2(T(\partial\Omega,r)\setminus \Omega)}^2+\Vert \nabla E_\Omega u\Vert_{L^2(T(\partial\Omega,r)\setminus\Omega)}^2\nonumber\\
&=\int_0^{r} \int_{\partial\Omega}\left(\vert E_\theta u (s)\vert^2+\vert \nabla E_\theta u(s)\vert^2_{\gamma(s)}\right)\dvol_{s}\theta\drm s\nonumber\\
&\leq \int_0^{r} \int_{\partial\Omega}\left(\vert E_\theta u (s)\vert^2+\vert \nabla E_\theta u(s)\vert^2_{\gamma(s)}\right)\frac 1{d(s)}\dvol_0\theta\drm s\nonumber\\
&=\int_{\partial\Omega}\int_0^{r} \left(\vert E_\theta u (s)\vert^2+\vert \nabla E_\theta u(s)\vert^2_{\gamma(s)}\right)\frac 1{d(s)}\drm s\dvol_0\theta\nonumber\\
&\leq \max_{s\in[0,r]}\frac 1{d(s)}\int_{\partial\Omega}\Vert E_\theta u\Vert^2_{H^1(\gamma_\theta\cap M\setminus \Omega)}\dvol_0\theta\nonumber
\end{align*}
Using Proposition~\ref{prop:onedim}, the last integral can be interpreted as an integral over $T(\partial\Omega,r)\cap\Omega$, and from~\eqref{assumption22} we get
\begin{align*}
\Vert E_\Omega u\Vert_{H^1(T(\partial\Omega,r)\setminus\Omega)}^2
&\leq  \max_{s\in[0,r]}\frac {164}{d(s)}\int_{\partial\Omega}\Vert \nabla u\Vert^2_{L^2(\gamma_\theta\cap\Omega)}\dvol_0\theta\\
&\quad
+(82+164G^2r^{-2})\max_{s\in[0,r]}\frac {1}{d(s)}\int_{\partial\Omega}\Vert u\Vert^2_{L^2(\gamma_\theta\cap\Omega)}\dvol_0\theta\\
&\leq  \max_{s\in[0,r]}\frac {164}{d(s)}\int_{\partial\Omega}\Vert \nabla u\Vert^2_{L^2(\gamma_\theta\cap\Omega)}D(s)\dvol_{-s}\theta\\
&\quad
+(82+164G^2r^{-2})\max_{s\in[0,r]}\frac {1}{d(s)}\int_{\partial\Omega}\Vert u\Vert^2_{L^2(\gamma_\theta\cap\Omega)}D(s)\dvol_{-s}\theta\\
&\leq  164\max_{s,t\in[0,r]}\frac {D(t)}{d(s)}\int_{\partial\Omega}\Vert \nabla u\Vert^2_{L^2(\gamma_\theta\cap\Omega)}\dvol_{-s}\theta\\
&\quad
+(82+164G^2r^{-2})\max_{s,t\in[0,r]}\frac {D(t)}{d(s)}\int_{\partial\Omega}\Vert u\Vert^2_{L^2(\gamma_\theta\cap\Omega)}\dvol_{-s}\theta\\
&=164\max_{s,t\in[0,r]}\frac {D(t)}{d(s)}\Vert \nabla u\Vert_{L^2(\Omega\cap T(\partial\Omega,r))}^2\\
&\quad+(82+164G^2r^{-2})\max_{s,t\in[0,r]}\frac {D(t)}{d(s)}\Vert u\Vert_{L^2(\Omega\cap T(\partial\Omega,r))}^2.
\end{align*}
Hence,~\eqref{sumnorm} becomes
\begin{align*}
\Vert E_\Omega u\Vert^2_{H^1(M)}&\leq \left(1+ 164\max_{s,t\in[0,r]}\frac {D(t)}{d(s)}\right)\Vert \nabla u\Vert_{L^2(\Omega)}^2\\
&\quad+\left(1+(82+164G^2r^{-2})\max_{s,t\in[0,r]}\frac {D(t)}{d(s)}\right)\Vert u\Vert_{L^2(\Omega)}^2,
\end{align*}
which implies the theorem.
\end{proof}
\begin{example}
  \label{ex:sphere2}
  If $\Omega=B(0,R_0)$ is an open ball in $\RR^n$ (see Example~\ref{ex:sphere}) then we have
  \begin{equation*}
    \max_{s,t\in[0,r]}\frac {D(t)}{d(s)}
    =\left(\frac{1+r/R_0}{1-r/R_0}\right)^{n-1}
  \end{equation*}
  and this constant is bounded by $3^{n-1}$ for $R \le R_0$.
  Moreover, the norm of the extention operator then is bounded by
  \begin{equation*}
    \Vert E_\Omega\Vert^2
    \leq 1+ 3^{n-1}\max(164,82+1312 r^{-2}).
  \end{equation*}
\end{example}
\section{Geometry of tubular neighborhoods around hypersurfaces and the proof of Theorem~\ref{main1}}\label{section:tubular}

In order to prove Theorem~\ref{main1}, we need geometric assumptions in order to bound the norm of the extension operator obtained in Theorem~\ref{thm:medi}, which depends crucially on the geometry of the tubular neighborhood of the boundary of the considered domain.
Let $M$ be a Riemannian manifold of dimension $n\geq 2$, and $N\subset M$ a hypersurface in $M$. In order to provide a quantitative estimate depending on curvature restrictions and the size of the tubular neighborhood, it is necessary to justify the regularity of the coordinate maps we use in our calculations. Our aim is to prove that under certain curvature restrictions in the tubular neighborhood of $N$, there exists $r>0$ such that for any $p\in N$, 
\begin{equation*}
  \exp_pt\nu,\quad t\in (-r,r)
\end{equation*}
is non-singular. On one hand, $\exp$ is non-singular only up to the cut-locus. On the other hand, geodesics emanating from different points starting in $N$ must not intersect. If such two geodesics intersect in a point, this point is called a focal point. It is known that focal points appear not later than cut points~\cite{Warner-66}. Thus, we can restrict our investigation to the absence of focal points along any geodesic emanating from $N$.  Below we provide estimates for the focal distance along a geodesic. Note that those are local considerations which do not focus on geodesics with starting points lying far away from each other inside the boundary.\\

Denote by $\Sec$ the sectional curvature of $M$ and $\II$ the second fundamental form of $N$ w.r.t.~the outward pointing normal $\nu$. Let $\gamma\colon [0,l]\to M$ be a distance minimizing geodesic such that $\gamma(0)\in N$, $\gamma'(0)\in (T_{\gamma(0)}N)^\perp$. As explained above, a point $q\in \gamma$ is called a \emph{focal point} if the exponential map at $\gamma(0)$ is singular in $q$. The absence of focal points is given by the following.

\begin{lem}[{\cite[Corollary~4.2]{Warner-66}}]\label{lemfocaldistance} Let $M$, $N$, $\gamma$ as above and $H,K\in\RR$. 
Suppose $\II\geq H$ in $\gamma(0)$ and $\Sec\leq K$. Then, there are no focal points along $\gamma$ on $[0,\min(r_0,l))$, where 
$r_0>0$ is the smallest positive number $r$ such that one of the three conditions below is satisfied:
\begin{align}\label{focaldistance}
\begin{cases}
\cot (\sqrt Kr)=\frac H{\sqrt K}, & \text{if $K>0$},\\
r=\frac 1H, & \text{if $K=0$},\\
\coth (\sqrt Kr)=\frac H{\sqrt K} & \text{if $K<0$}.
\end{cases}
\end{align}
\end{lem}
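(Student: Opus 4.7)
The plan is to deduce the lemma from standard Jacobi field and index form comparison along $\gamma$, following Warner~\cite{Warner-66}. First I would recall the variational characterization of focal points: $\gamma(t_0)$ is focal to $N$ along $\gamma$ if and only if there exists a non-trivial $N$-Jacobi field $J$ along $\gamma$ with $J(t_0)=0$, where an $N$-Jacobi field is a solution of the Jacobi equation satisfying the transversality conditions $J(0)\in T_{\gamma(0)}N$ and $J'(0)+S_\nu J(0)\in (T_{\gamma(0)}N)^\perp$, with $S_\nu$ the shape operator of $N$ in direction $\nu=\gamma'(0)$. Equivalently, the first focal distance is the supremum of those $t_0$ for which the index form
\[
I_{t_0}(X,X)=\int_0^{t_0}\bigl(|X'|^2-\langle R(X,\gamma')\gamma',X\rangle\bigr)\drm t-\langle S_\nu X(0),X(0)\rangle
\]
is positive definite on piecewise smooth perpendicular variations $X$ along $\gamma$ with $X(0)\in T_{\gamma(0)}N$ and $X(t_0)=0$.

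Second, I would use the curvature hypotheses to reduce to a one-dimensional comparison. The bound $\Sec\leq K$ yields $\langle R(X,\gamma')\gamma',X\rangle\leq K|X|^2$ pointwise, and $\II\geq H$ at $\gamma(0)$ (with Warner's sign convention for the outward normal $\nu$) controls the boundary term by $\langle S_\nu X(0),X(0)\rangle \leq -H|X(0)|^2$. Substituting these estimates into $I_{t_0}$ and specializing to $X(t)=f(t)E(t)$ for a parallel unit vector field $E$ perpendicular to $\gamma'$, positivity of $I_{t_0}$ is implied by positivity of the scalar form
\[
\tilde I_{t_0}(f,f):=\int_0^{t_0}\bigl((f')^2-Kf^2\bigr)\drm t+Hf(0)^2
\]
on scalar functions $f$ with $f(t_0)=0$.

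Third, I would solve the corresponding Euler--Lagrange problem. The minimizer satisfies $f''+Kf=0$ on $[0,t_0]$ with $f(t_0)=0$ and natural boundary condition $f'(0)=-Hf(0)$. Its first positive zero is computed in closed form in the three regimes: for $K>0$ one obtains $\cot(\sqrt{K}\,r_0)=H/\sqrt{K}$, for $K=0$ one obtains $r_0=1/H$, and for $K<0$ one obtains $\coth(\sqrt{|K|}\,r_0)=H/\sqrt{|K|}$, matching \eqref{focaldistance}. For $t_0<r_0$ the minimizer is strictly positive, hence $\tilde I_{t_0}>0$, and by the reduction above $I_{t_0}>0$, ruling out focal points on $[0,t_0)$. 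The main obstacle is matching the sign conventions for $\II$ and $\nu$ with those implicit in the statement and rigorously justifying the reduction from vector-valued to scalar Jacobi fields in dimension greater than two; both points are routine once the index form machinery is in place and are carried out in full generality in \cite{Warner-66}.
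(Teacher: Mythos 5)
Your proposal is correct and matches the route the paper intends: the paper does not prove this lemma but imports it directly as Warner's Corollary~4.2, noting only that the threshold $r_0$ is the first zero of the comparison $N$-Jacobi equation in constant curvature, which is exactly the scalar problem $f''+Kf=0$, $f(0)=1$, $f'(0)=-Hf(0)$ that your index-form reduction produces. The two points you flag as needing care (sign conventions for $S_\nu$, and the passage from vector-valued fields to the scalar form via $f=|X|$ and Cauchy--Schwarz) are indeed the only delicate steps, and both are handled in \cite{Warner-66}; note also that in the $K<0$ case the paper's formula should read $\coth(\sqrt{|K|}\,r)=H/\sqrt{|K|}$, as you correctly state.
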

Note that the equations~\eqref{focaldistance} come from a comparison result with the first zero of the $N$-Jacobi field equation in a space of constant curvature for some hypersurface with constant second fundamental form. If no positive solution exists, there are no focal points along any geodesic.
Warner's result ensures that condition \ref{ass1} is satisfied, so that the parametrization of the tubular neighborhood of $\partial \Omega$ is well-defined for $r_0$. Now we will provide conditions such that \ref{ass2} is satisfied in an appropriate neighborhood of $N$. Suppose that $\gamma\colon [0,r_0)\to M$, $\gamma(0)=p\in N$ is as above. We need to control the metric tensor along $\gamma$ in terms of geometric restrictions. To this end, we provide a comparison estimate for the distance hypersurfaces along $\gamma$. Although the proof is straightforward and adapted from the proof in~\cite{Petersen-06} for distance spheres, we are not aware of any result in the literature that gives our comparison estimates below, so we decided to include a full proof in the appendix using the Riccati comparison technique.
As in the section above, we parametrize the metric with respect to the distance function to $N$, $s:=\dist(\cdot,N)$. Then, we can decompose
\begin{align}\label{metricdecomposition}
g=ds^2+g_s,
\end{align}
where $g_s$ is the metric on $N$ evolving with respect to $s$ as long as there are no focal points along the corresponding distance minimizing geodesic $\gamma$ perpendicular to $N$ up to $r_0$. 
\begin{prop}\label{hessiancomparison}
Let $k, K\in\RR$, and $\gamma$, $r_0$ as above. If $\Sec\geq k$ along $\gamma$ on the interval $[0,r_0)$, then for almost all $s\in[0,r_0)$,
\begin{equation}\label{lowerbound}
g_s\leq \mu_{k,H_+}^2(s)g_0,
\end{equation}
and if $\Sec\leq K$, then for almost all $s\in[0,r_0)$, 
\begin{equation}
\mu_{K,H_-}^2(s)g_0\leq g_s,
\end{equation}
where $H_+$ and $H_-$ are the maximum resp.\ minimum principal curvatures of $N$ in $\gamma(0)$, and $\mu_{k,H}(s)$ are functions 
that arise from the solution of a Ricatti type ODE (cf.~Appendix~\ref{appendix}, Eq.~\eqref{substitution}).
\end{prop}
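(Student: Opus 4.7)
The plan is a standard Jacobi/Riccati comparison along the perpendicular geodesic $\gamma$, lifted to the metric on the distance hypersurfaces. First, I would set up the geometry: choose an orthonormal basis $e_1,\dots,e_{n-1}$ of $T_pN$ at $p=\gamma(0)$ and parallel transport it along $\gamma$ to an orthonormal frame of $\gamma'(s)^\perp$. Let $A(s)$ be the matrix representing, in this parallel frame, the $N$-Jacobi fields $J_i$ determined by $J_i(0)=e_i$ and $\nabla_{\gamma'}J_i(0)=S(e_i)$, where $S$ is the shape operator of $N$ with respect to $\nu$. Then
\begin{equation*}
A''(s)+\mathcal{R}(s)A(s)=0,\qquad A(0)=I,\qquad A'(0)=S,
\end{equation*}
where $\mathcal{R}(s)$ is the symmetric matrix of the tidal operator $v\mapsto R(v,\gamma'(s))\gamma'(s)$ in the parallel frame. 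Since there are no focal points on $[0,r_0)$ by Lemma~\ref{lemfocaldistance}, $A(s)$ stays invertible there, and the pullback of $g_s$ through the Jacobi flow reads $g_s=A(s)^\top A(s)$, while $g_0=I$.

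Next I would introduce the symmetric shape operator of the distance hypersurface at $\gamma(s)$,
\begin{equation*}
U(s):=A'(s)A(s)^{-1},\qquad U(0)=S,
\end{equation*}
which satisfies the matrix Riccati equation $U'+U^2+\mathcal{R}=0$. The corresponding scalar model is $\mu_{c,H}''+c\mu_{c,H}=0$, $\mu_{c,H}(0)=1$, $\mu_{c,H}'(0)=H$, with $u_{c,H}:=\mu_{c,H}'/\mu_{c,H}$ obeying $u_{c,H}'+u_{c,H}^2+c=0$. The matrix Riccati comparison principle asserts that any symmetric solutions $V_1,V_2$ of $V_i'+V_i^2+R_i=0$ with $V_1(0)\leq V_2(0)$ and $R_1\geq R_2$ in the operator order satisfy $V_1(s)\leq V_2(s)$ as long as both exist; this is proved by differentiating $V_2-V_1$ and invoking a matrix Gronwall argument.

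Applying this principle twice yields the required two-sided pinching of $U$. Under $\Sec\geq k$ along $\gamma$ we have $\mathcal{R}(s)\geq kI$, and since $H_+$ is the largest principal curvature, $S\leq H_+I$; comparing $U$ with $u_{k,H_+}I$ gives $U(s)\leq u_{k,H_+}(s)I$. Under $\Sec\leq K$ we have $\mathcal{R}(s)\leq KI$ and $S\geq H_-I$; comparing $u_{K,H_-}I$ with $U$ gives $U(s)\geq u_{K,H_-}(s)I$. To convert these into metric bounds, note that since $U$ is symmetric $(A^\top A)'=2A^\top UA$, so for any scalar model function $\mu=\mu_{c,H}$ with $u=\mu'/\mu$, the quantity $Q(s):=\mu(s)^{-2}A(s)^\top A(s)$ satisfies
\begin{equation*}
Q'(s)=2\mu(s)^{-2}\,A(s)^\top\bigl(U(s)-u(s)I\bigr)A(s).
\end{equation*}
With $(c,H)=(k,H_+)$, the inequality $U\leq u_{k,H_+}I$ forces $Q'\leq 0$, whence $Q(s)\leq Q(0)=I$, i.e.\ $g_s\leq\mu_{k,H_+}^2(s)\,g_0$. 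With $(c,H)=(K,H_-)$, the inequality $U\geq u_{K,H_-}I$ forces $Q'\geq 0$ and hence $g_s\geq\mu_{K,H_-}^2(s)\,g_0$.

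The main technical obstacle is to carry out the Riccati comparison in the full symmetric-operator order rather than eigenvalue-by-eigenvalue: only then does the control on $U$ propagate to a genuine bilinear-form sandwich for $g_s$, and not merely to weaker scalar bounds on its trace or determinant. The subtle point is to exploit that $S$, whose individual eigenvalues are unknown, is nonetheless sandwiched between $H_-I$ and $H_+I$ in the operator sense by the very definition of $H_\pm$. The \emph{almost all $s$} qualifier in the statement reflects that the upper model function $\mu_{k,H_+}$ could vanish at some $s_*\in[0,r_0)$, while the lower model $\mu_{K,H_-}$ stays positive on $[0,r_0)$ thanks to the choice of $r_0$ in Lemma~\ref{lemfocaldistance}.
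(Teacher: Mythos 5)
Your argument is correct, but it takes a genuinely different route from the one in the appendix. The paper works with the Hessian of the distance function directly: it sets $\lambda(s)=\max_{v\perp\partial_s}\Hess s(v,v)/g_s(v,v)$, notes that this function is only Lipschitz, derives the scalar Riccati inequality $\lambda'+\lambda^2\leq -k$ at points of differentiability by extending the maximizing vector to a parallel field, and then integrates $\partial_s g_s=2\Hess s\leq 2(\mu'/\mu)\,g_s$ against the conformal variation $h_s=\mu_{k,H_+}^2(s)g_0$. You instead keep the full operator structure: the Jacobi matrix $A$ with $g_s=A^\top A$, the shape operator $U=A'A^{-1}$ of the distance hypersurfaces, and the matrix Riccati comparison in the symmetric-operator order --- this is precisely Eschenburg's comparison theorem, which the paper itself invokes in the appendix, but only for the mean curvature. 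Your monotonicity computation for $Q=\mu^{-2}A^\top A$ is the same integration step as the paper's conformal-variation comparison, just written multiplicatively. What your route buys: the inequalities hold for \emph{every} $s\in[0,r_0)$ rather than almost every $s$ (the a.e.\ qualifier in the paper is an artifact of the Lipschitz-only regularity of the max-eigenvalue function, not, as you suggest, of a possible zero of $\mu_{k,H_+}$), and you treat both bounds symmetrically where the paper proves one and asserts the other is similar. What the paper's route buys: it needs only the scalar Riccati comparison plus an elementary a.e.\ differentiability argument, avoiding the matrix comparison theorem. One cosmetic mismatch worth flagging: your model function solves $\mu''+c\mu=0$ with $\mu(0)=1$, $\mu'(0)=H$, whereas the paper's $\mu_{k,H}$ in~\eqref{substitution} is defined through the substitution $\lambda=\mu'/(\mu+C)$ and an inhomogeneous equation; since the paper's own proof subsequently uses the quotient $\mu'/\mu$, this discrepancy is already present in the source and does not affect the validity of your argument.
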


\begin{prop}\label{prop:volumecomparison}
Under the assumptions as in Proposition~\ref{metricdecomposition}, we have
\begin{align}\label{volumecomparison}
\dvol_s&\leq \dvol^{k,H_+}_s=D(s)\dvol_0, \quad s\in [0,r_0), \\
\quad\text{and}\quad 
\dvol_{-s}&\geq \dvol^{K,H_-}_{-s}=d(-s)\dvol_0, \quad s\in [0,r_0).
\end{align}
with
\[
D(s)=\mu_{k,H_+}^{n-1}(s),\quad\text{and}\quad d(s)=\mu_{K,H_-}^{n-1}(s).
\]
\end{prop}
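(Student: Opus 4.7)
My plan is to deduce Proposition~\ref{prop:volumecomparison} directly from Proposition~\ref{hessiancomparison} by passing from the bilinear-form comparison of $g_s$ to a pointwise comparison of the induced volume densities on the distance hypersurfaces, and then to identify the resulting scalar factor with the model-space volume element.

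First, I would fix a point $p\in N$ and work in a local chart $(x^2,\dots,x^n)$ on a neighborhood $U\subset N$ of $p$, producing Fermi coordinates $(s,x^2,\dots,x^n)$ on the tubular neighborhood $T(N,r_0)$ via the map $\psi(s,x)=\exp_x(s\nu)$. In these coordinates the metric decomposes as in~\eqref{metricdecomposition}, so the volume form of the distance hypersurface at parameter $s$ is $\dvol_s=\sqrt{\det(g_s)_{ij}(x)}\,dx^2\wedge\cdots\wedge dx^n$, where $(g_s)_{ij}$ is the $(n-1)\times(n-1)$ matrix of the restricted metric.

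Next I would invoke Proposition~\ref{hessiancomparison}: under $\Sec\geq k$ one has the bilinear form inequality $g_s\leq \mu_{k,H_+}^2(s)\,g_0$ for a.e.\ $s\in[0,r_0)$. As symmetric positive-definite matrices this means $(g_s)_{ij}\leq\mu_{k,H_+}^2(s)\,(g_0)_{ij}$, and by monotonicity of the determinant on symmetric positive-definite matrices (equivalently, by simultaneously diagonalizing the two forms) we obtain
\begin{equation*}
  \det(g_s)_{ij}\leq \mu_{k,H_+}^{2(n-1)}(s)\,\det(g_0)_{ij}.
\end{equation*}
Taking square roots and multiplying by the coordinate volume form yields $\dvol_s\leq \mu_{k,H_+}^{n-1}(s)\,\dvol_0$, which is the asserted upper bound with $D(s):=\mu_{k,H_+}^{n-1}(s)$. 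The symmetric argument with $\Sec\leq K$ and the lower bilinear form inequality $\mu_{K,H_-}^2(s)g_0\leq g_s$ (applied on the exterior side, i.e.\ replacing $s$ by $-s$) gives $\dvol_{-s}\geq \mu_{K,H_-}^{n-1}(s)\,\dvol_0$, defining $d(s):=\mu_{K,H_-}^{n-1}(s)$.

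Finally, to justify the identification of these quantities with $\dvol_s^{k,H_+}$ and $\dvol_{-s}^{K,H_-}$, I would recall that in a space of constant sectional curvature $k$ with a hypersurface of constant principal curvatures $H_+$, the function $\mu_{k,H_+}(s)$ (defined via the Riccati ODE solution in the Appendix) is exactly the factor by which the induced metric on the distance hypersurface scales against $g_0$, since the Jacobi equation along any normal geodesic decouples into $n-1$ identical scalar equations with initial data determined by $H_+$. Hence the volume element in the model is precisely $\mu_{k,H_+}^{n-1}(s)\,\dvol_0$, matching our $D(s)$; the analogous statement holds for $d(s)$. The only delicate point, which I expect to be routine but worth stating, is that the bilinear inequality from Proposition~\ref{hessiancomparison} holds only almost everywhere in $s$, so the volume comparison inherits the same a.e.\ qualifier; but since $\dvol_s$ depends smoothly on $s$ on $[0,r_0)$ (no focal points by Lemma~\ref{lemfocaldistance}), the inequality extends by continuity to every $s\in[0,r_0)$.
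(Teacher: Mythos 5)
Your argument is correct, but it is a genuinely different route from the one the paper takes. You deduce the volume comparison as a corollary of the metric comparison in Proposition~\ref{hessiancomparison}: from the bilinear-form inequality $g_s\leq \mu_{k,H_+}^2(s)\,g_0$ you pass to determinants via monotonicity of $\det$ on positive-definite matrices and take square roots, which immediately yields $\dvol_s\leq \mu_{k,H_+}^{n-1}(s)\,\dvol_0$; your handling of the a.e.\ qualifier by continuity of $s\mapsto\dvol_s$ in the focal-point-free range is fine. The paper instead works one level down, with the first-order evolution equation $\partial_s\dvol=\Delta s\,\dvol$ for the volume density along the normal geodesic, controls the mean curvature $\Delta s$ of the distance hypersurfaces by Eschenburg's comparison theorem, and integrates the resulting scalar ODE inequality against the model density $\dvol_s^{k,H}=\mu_{k,H}^{n-1}(s)\dvol_0$. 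The trade-off: the paper's route needs only a trace (mean curvature) comparison and would in principle give a sharper constant involving $\Tr\II(0)$ rather than $(n-1)$ copies of the extremal principal curvature, whereas your route is more elementary, reuses a result already proved, and in fact produces exactly the constants $D(s)=\mu_{k,H_+}^{n-1}(s)$ and $d(s)=\mu_{K,H_-}^{n-1}(s)$ appearing in the statement. Your closing identification of $\mu^{n-1}$ with the model-space volume element via the decoupled Jacobi equation is also correct and mirrors the paper's definition of $\dvol_s^{k,H}$.
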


For a self-contained proof, see the Appendix. An additional ingredient for the upper bound of the extension operator is the existence of cut-off functions with controlled gradient, i.e., condition~\ref{ass2}. 
However, we have to assume that the sectional curvature is bounded from below in the tubular neighborhood of $\Omega$ to compare the metric tensors along geodesics perpendicular to $\partial\Omega$. This readily implies Laplace comparison for the distance function, and hence, the existence of appropriate cut-off functions, see, e.g., \cite{LiYau-86}, where cut-off functions are constructed depending on lower bounds of the Ricci curvature.

%

\begin{proof}[Proof of Theorem~\ref{main1}] According to the explanation regarding the existence of cut-off functions, \ref{ass2} is satisfied by Definition~\ref{defrhk}~\ref{main:tubular}. If $\Omega\subset M$ satisfies \ref{exterior}, \ref{interior}, \ref{main:secondf} and \ref{main:tubular} of Definition~\ref{defrhk}, then according to Lemma~\ref{lemfocaldistance}, \ref{ass1} and \ref{ass3} are satisfied for $r$ being the minimum of all the $r_0$ obtained in~\eqref{focaldistance} for any point in $\partial\Omega$.
Thus, the extension operator constructed in Section~\ref{section:extensions} exists and is bounded uniformly in $K$ and $H$.
\end{proof}
\begin{remark}
The extension operator defined above can be used in~\cite{Olive-19} to obtain a Li-Yau type estimate on the Neumann heat kernel for $(r,H,K)$-regular domains under integral curvature conditions with appropriately chosen $r>0$, where the estimate would only depend on geometric parameters. However, one would still need to require the ambient space to be globally doubling, since in general only local volume doubling holds under integral Ricci curvature assumptions. To remove the necessity of this condition, we develop in the next section local estimates for the Neumann heat kernel that will only require local volume doubling. 
\end{remark}

\section{Localizing estimates for the Neumann heat kernel}\label{section:heatkernel}

In~\cite{ChoulliKayserOuhabaz-15}, the authors observed that the main result of~\cite{BoutayebCoulhonSikora-15} can be used to prove full Gau\ss{}ian upper bounds for the Neumann heat kernel of relatively compact domains wiht Lipschitz boundary satisfying the volume doubling property provided the ambient space is globally volume doubling and its heat kernel has full Gau\ss{}ian upper bounds. In the proof they use the existence of a bounded extension operator. Although they show as an example that their result holds for relatively compact domains with Lipschitz boundary in $\RR^n$ and hyperbolic spaces, they do not discuss conditions for the extension operator to be uniformly bounded in geometric parameters. In Section~\ref{section:tubular} we have shown the existence of extension operators with this property, such that those extension operators can be used to give a quantitative version of the results in \cite{ChoulliKayserOuhabaz-15}.

Furthermore, the paper does not discuss any localized results, i.e., if local volume doubling and small-time upper bound for the heat kernel on $M$ imply a small-time Gau\ss{}ian upper bound for the Neumann heat kernel on a domain $\Omega\subset M$ with sufficiently regular boundary. This is of independent interest for other quantitative applications. In fact, their main theorem is based on \cite[Theorem 1.1]{BoutayebCoulhonSikora-15}, which does not apply to small-time Gau\ss{}ian upper bounds. While it is indicated on p.~308 
of the latter article that their results hold in the localized situation as well, \cite[Theorem 1.1]{BoutayebCoulhonSikora-15}, as it is formulated, does not hold assuming only local instead of global volume doubling. 

We will show here Neumann heat kernel upper bounds for $(r,H,K)$-regular bounded subsets of $M$ while $R^2\kappa_M(p,R)$ is small for $p>n/2$, i.e., Theorem~\ref{main2}. As explained above, this does neither follow directly from~\cite{ChoulliKayserOuhabaz-15} nor from~\cite{BoutayebCoulhonSikora-15}. 

The results presented below are adaptions from~\cite{BoutayebCoulhonSikora-15} and~\cite{ChoulliKayserOuhabaz-15}. Altough the proofs are almost the same, we give a complete outline, because the differences are quite subtle. The main differences are that we only use local volume doubling and localized heat kernel estimates and our extension operators from Theorem~\ref{main1}. First, we will show that a local volume doubling and upper heat kernel estimate imply a family of localized Gagliardo-Nirenberg inequalities. Although we work directly on Riemannian manifolds, the proof is the same for metric measure spaces. Second, we show that Theorem~\ref{main1} implies local Gagliardo-Nirenberg inequalities on $(r,H,K)$-regular bounded domains with appropriate $r$ with a proof slightly different from~\cite{ChoulliKayserOuhabaz-15}. The desired upper Neumann heat kernel bound then follows directly from~\cite[Theorem~1.1]{BoutayebCoulhonSikora-15}, since the Neumann Laplace operator satisfies the finite speed propagation property.

In the following we use a notation similar to \cite{BoutayebCoulhonSikora-15}. Let $M$ be a Riemannian manifold of dimension $n\geq 2$. Denote $v_r(x):=\Vol(B(x,r))$. We say that $M$ satisfies the \emph{local volume doubling condition} if there are $C_D>0$ and $R>0$ such that
\begin{align}\label{doubling}
v_r(x)\leq C_D\left(\frac rs\right)^n v_s(x),\quad x\in M, \ 0<s\leq r\leq R.
\end{align}

We say that the heat kernel $p$ of $M$ satisfies \emph{local upper bounds} if there are $C,R>0$ such that
\begin{align}\label{hkestimate}
p_t(x,y)\leq \frac{C}{v_{\sqrt t}(x))^\frac{1}{2}v_{\sqrt t}(y)^\frac{1}{2}}, \quad t\in (0,R^2/4].
\end{align}

Furthermore, we define the following family of Gagliardo-Nirenberg inequalities: there exists $R_0>0$, $q\in(2,\infty]$, $\frac{q-2}qn<2$, such that
\begin{align}\label{GN}
  \Bigl\Vert v_r^{\frac 12-\frac 1q} f\Bigr\Vert_q\leq C \left(\Vert f\Vert_2+r\Vert \Delta^{1/2}f\Vert_2\right), \quad r\leq R_0.
\end{align}

Inequalities~\eqref{hkestimate} and~\eqref{GN} are short-time and localized assumptions in contrast to the global assumptions made in~\cite{BoutayebCoulhonSikora-15}. One could also choose another function $v$ instead of the volume measure satisfying similar properties as in~\eqref{doubling} and~\eqref{hkestimate}, while the underlying volume measure fulfils~\eqref{doubling} separately, but we decided not to do so for the sake of presentation.

We will often write $f$ to denote the multiplication operator by the function $f$. Given $1\leq p,q\leq +\infty$ and $\gamma, \delta\geq 0$ such that $\gamma + \delta = \frac{1}{p}-\frac{1}{q}$, we will say that the condition $(vEv_{p,q,\gamma})$ holds if there exist $t_0>0$ such that
\begin{equation}\label{vEv}\tag{$vEv_{p,q,\gamma}$}
  \sup_{0<t\leq t_0} \|v_{\sqrt{t}}^\gamma \e^{-t\Delta}v_{\sqrt{t}}^\delta \|_{p,q} <\infty,
\end{equation}
where $\|\cdot \|_{p,q}$ denotes the operator norm from $L^{p}(M)$ to $L^q(M)$, i.e., if there is a uniform bound on $(0,t_0]$ for the operator norm of $v_{\sqrt{t}}^\gamma \e^{-t\Delta}v_{\sqrt{t}}^\delta$. As with the assumption~\eqref{hkestimate}, the difference between our assumption $(vEv_{p,q,\gamma})$ and the one in~\cite{BoutayebCoulhonSikora-15} is that our assumption is local, for short time, as opposed to a global assumption for all values of $t>0$.  Note that if $p'$ and $q'$ are the conjugate exponents of $p$ and $q$, respectively, then by duality $(vEv_{p,q,\gamma})$ is equivalent to $(vEv_{p',q',\delta})$. 

\begin{prop}[{cf.~\cite[Proposition~2.1.1 and Corollary~2.1.2]{BoutayebCoulhonSikora-15}}]\label{hknorm}
 Assume that~\eqref{doubling} is satisfied. Then the following conditions are equivalent.
 \begin{itemize}
 
\item~\eqref{hkestimate} holds up to $t_0=R^2/4$,
\item $(vEv_{\infty,\infty,\frac12})$ 
  is satisfied up to time $t_0$,
  \item $(vEv_{1,\infty,\frac{1}{2}})$ (i.e.,~\eqref{vEv} with $p=1$, $q=\infty$ and $\gamma=1/2$) is satisfied up to time $t_0$,
  \item $(vEv_{1,2,0})$  (i.e.,~\eqref{vEv} with $p=1$, $q=2$ and $\gamma=0$) is satisfied up to time $t_0/2$,
  \item $(vEv_{2,\infty,\frac12})$  (i.e.,~\eqref{vEv} with $p=2$, $q=\infty$ and $\gamma=1/2$) is satisfied up to time $t_0/2$.
 \end{itemize}
\end{prop}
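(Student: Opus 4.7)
The plan is to mirror the classical Boutayeb--Coulhon--Sikora equivalences while keeping careful track of the fact that our hypotheses are only local (valid up to $t_0 = R^2/4$, after which we must allow ourselves a factor $1/2$ in the time interval to gain room for applying the semigroup identity). Throughout, the two basic ingredients are (i) the interpretation of the $L^1\to L^\infty$ norm of an integral operator as the essential supremum of its kernel, and (ii) self-adjointness of $\e^{-t\Delta}$ together with the reproducing identity $p_{2t}(x,x)=\Vert p_t(x,\cdot)\Vert_2^2$. The local volume doubling assumption~\eqref{doubling} is used repeatedly in the form $v_{\sqrt{2t}}(x)\le 2^{n/2}C_D\, v_{\sqrt t}(x)$, which allows one to replace $v_{\sqrt{2t}}$ by $v_{\sqrt t}$ at the price of a universal constant.

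First I would note that $(vEv_{1,\infty,1/2})$ is \emph{tautologically} equivalent to~\eqref{hkestimate}, since the kernel of $v_{\sqrt t}^{1/2}\,\e^{-t\Delta}\,v_{\sqrt t}^{1/2}$ is $v_{\sqrt t}(x)^{1/2} p_t(x,y) v_{\sqrt t}(y)^{1/2}$ and the $L^1\to L^\infty$ norm of an integral operator coincides with the essential supremum of its kernel. This yields \eqref{hkestimate}$\iff(vEv_{1,\infty,1/2})$ on the common time interval $(0,t_0]$. Next, to obtain (\ref{hkestimate})$\Rightarrow(vEv_{2,\infty,1/2})$, I would apply Cauchy--Schwarz to the kernel representation of $\e^{-t\Delta}$ to get $\vert(\e^{-t\Delta}f)(x)\vert\le\sqrt{p_{2t}(x,x)}\,\Vert f\Vert_2$; inserting~\eqref{hkestimate} at time $2t$ and using local doubling to pass from $v_{\sqrt{2t}}(x)$ to $v_{\sqrt t}(x)$ gives a uniform bound of $v_{\sqrt t}(x)^{1/2}\vert(\e^{-t\Delta}f)(x)\vert$, valid for $t\le t_0/2$. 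Equivalence $(vEv_{2,\infty,1/2})\iff(vEv_{1,2,0})$ is immediate from the duality $(L^2)^*=L^2$, $(L^\infty)^*=L^1$ and the self-adjointness of $\e^{-t\Delta}$ (hence of its multiplication-dressed variants). For the return direction $(vEv_{2,\infty,1/2})\Rightarrow$\eqref{hkestimate}, I would factor via the semigroup identity
\begin{equation*}
v_{\sqrt t}^{1/2}\,\e^{-2t\Delta}\,v_{\sqrt t}^{1/2}
=\bigl(v_{\sqrt t}^{1/2}\,\e^{-t\Delta}\bigr)\bigl(\e^{-t\Delta}\,v_{\sqrt t}^{1/2}\bigr),
\end{equation*}
so that $\Vert v_{\sqrt t}^{1/2}\,\e^{-2t\Delta}\,v_{\sqrt t}^{1/2}\Vert_{1,\infty}$ is controlled by the product of the $(2,\infty)$ and $(1,2)$ norms; reading the resulting bound as a kernel estimate and rescaling the time variable $s=2t$, local doubling again converts $v_{\sqrt{s/2}}$ into $v_{\sqrt s}$, delivering~\eqref{hkestimate} on $(0,t_0]$.

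For the equivalence with $(vEv_{\infty,\infty,1/2})$, the easy direction (1)$\Rightarrow$(2) goes by plugging~\eqref{hkestimate} into the integral $v_{\sqrt t}(x)^{1/2}\int p_t(x,y)v_{\sqrt t}(y)^{1/2}\,\dvol(y)$; after majorising $v_{\sqrt t}(y)$ by $v_{\sqrt t}(x)$ times the local-doubling factor $(1+d(x,y)/\sqrt t)^n$ and absorbing this polynomial weight into the Gaussian tail that can be generated from the on-diagonal bound via the Grigoryan/Davies self-improvement (or alternatively via the finite-propagation-speed machinery used later in Section~\ref{section:heatkernel}), the resulting integral is uniformly bounded on $(0,t_0]$. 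The converse direction (2)$\Rightarrow$(1) is obtained by interpolation with the trivial $L^1$-contractivity of $\e^{-t\Delta}v_{\sqrt t}^{1/2}$ that is available after one factorisation step, closing the circle.

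The main obstacle I anticipate is the careful bookkeeping in the last paragraph: the condition~(\ref{vEv}) with $p=q=\infty$ is strictly weaker than the kernel bound, and recovering the latter requires using self-improvement of on-diagonal heat kernel bounds to full Gaussian estimates, which in the local setting must be done with some care because the doubling constant and the heat kernel constant are only controlled on a bounded time interval. Everything else reduces to standard duality plus one semigroup factorisation, with local volume doubling converting $v_{\sqrt{2t}}$ into $v_{\sqrt t}$ at the cost of a dimension-dependent constant.
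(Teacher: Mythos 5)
Your treatment of the core equivalences --- \eqref{hkestimate} $\Leftrightarrow (vEv_{1,\infty,\frac12})$ via the kernel characterization of the $L^1\to L^\infty$ operator norm (Dunford--Pettis), and the passage to $(vEv_{1,2,0})$ and $(vEv_{2,\infty,\frac12})$ on the halved time interval via the factorization $\Vert T^*T\Vert_{1,\infty}=\Vert T\Vert_{1,2}^2=\Vert T^*\Vert_{2,\infty}^2$ (your Cauchy--Schwarz step is this identity in disguise), duality, and local doubling together with the monotonicity of $r\mapsto v_r(x)$ to reconcile $v_{\sqrt{2t}}$ with $v_{\sqrt t}$ --- is exactly the paper's argument, including the bookkeeping that produces the factor $1/2$ in the time range for the last two conditions.

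The weak point is the $(vEv_{\infty,\infty,\frac12})$ equivalence. First, since $\gamma+\delta=\frac1p-\frac1q=0$ here, one has $\delta=-\frac12$, so the condition reads $\sup_{t}\Vert v_{\sqrt t}^{1/2}\e^{-t\Delta}v_{\sqrt t}^{-1/2}\Vert_{\infty,\infty}<\infty$ and the relevant integral is $v_{\sqrt t}(x)^{1/2}\int p_t(x,y)\,v_{\sqrt t}(y)^{-1/2}\,\dvol(y)$, not the one you wrote with $v_{\sqrt t}(y)^{+1/2}$; with that sign the quantity is not even expected to be finite. Your instinct that the forward direction requires upgrading the on-diagonal bound to off-diagonal Gaussian decay is correct once the sign is fixed. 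Second, the converse as you describe it does not close: $\e^{-t\Delta}v_{\sqrt t}^{1/2}$ is not an $L^1$-contraction (only $\e^{-t\Delta}$ is), and interpolating $(vEv_{\infty,\infty,\frac12})$ against its $L^1$-dual $(vEv_{1,1,-\frac12})$ only reproduces the trivial $L^2\to L^2$ bound, so no $L^1\to L^\infty$ smoothing comes out of it. The paper sidesteps this entirely by noting that the corresponding argument in Boutayeb--Coulhon--Sikora is carried out for each fixed $t$ separately and therefore localizes verbatim; a self-contained proof would have to reproduce their composition argument for exponent pairs on the same side of $2$, not the interpolation you propose.
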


\begin{proof}
The proof is analogous to the one of Proposition~2.1.1 and Corollary~2.1.2 in~\cite{BoutayebCoulhonSikora-15}. The only difference is that the last two conditions will hold for different time ranges than the first two. The equivalence between~\eqref{hkestimate} and $(vEv_{1,\infty,\frac{1}{2}})$ follows directly from the Dunford-Pettis theorem: for all $t\in(0,t_0]$, we have
\[
\Vert v_{\sqrt t}^{1/2}\euler^{-t\Delta}v_{\sqrt t}^{1/2}\Vert_{1,\infty}=\sup_{x,y\in M} v_{\sqrt t}^{1/2}(x)p_t(x,y)v_{\sqrt t}^{1/2}(y)\leq C<\infty.
\]
Since the proof in~\cite{BoutayebCoulhonSikora-15} is for a fixed value of $t$, the same proof gives us the equivalence in our case. The equivalence between $(vEv_{1,2,0})$ and $(vEv_{2,\infty,\frac{1}{2}})$ follows by duality. It suffices to show that $(vEv_{1,\infty, \frac{1}{2}})$ holds up to time $t_0$ if and only if $(vEv_{1,2,0})$ holds up to time $t_0/2$. Note that if $T:L^1 \rightarrow L^2$, then
\[\|T^*T\|_{1,\infty} = \|T^*\|_{2,\infty}^2 = \|T\|_{1,2}^2,\]
so by taking $T=\e^{-(t/2)\Delta}v^{1/2}_{\sqrt{t}}$, we get that
\[\|v_{\sqrt{t}}^{1/2}\e^{-t\Delta}v^{1/2}_{\sqrt{t}}\|_{1,\infty} = \|v^{1/2}_{\sqrt{t}}\e^{-(t/2)\Delta}\|_{2,\infty}^2 = \|\e^{-(t/2)\Delta}v^{1/2}_{\sqrt{t}}\|_{1,2}^2.\]
 Hence, $(vEv_{1,\infty,\frac{1}{2}})$ is equivalent to 
 \[\sup_{0<t\leq t_0}\|v^{1/2}_{\sqrt{t}}\e^{-(t/2)\Delta}\|_{2,\infty}^2 <\infty,\]
 as well as to 
 \[\sup_{0<t\leq t_0}\|\e^{-(t/2)\Delta}v^{1/2}_{\sqrt{t}}\|_{1,2}^2 <\infty.\]
 If $(vEv_{1,\infty,\frac{1}{2}})$ holds up to time $t_0$, then defining $\tilde{t} = 2t$ for any $0<t\leq t_0/2$, we have that
 \[\sup_{0<t\leq t_0/2}\|\e^{-t\Delta}v^{1/2}_{\sqrt{t}}\|_{1,2}= \sup_{0<\tilde{t}\leq t_0}\|\e^{-(\tilde{t}/2)\Delta}v^{1/2}_{\sqrt{\tilde{t}/2}}\|_{1,2} \leq  \sup_{0<\tilde{t}\leq t_0}\|\e^{-(\tilde{t}/2)\Delta}v^{1/2}_{\sqrt{\tilde{t}}}\|_{1,2} < +\infty \]
 where we used that $v_r(x)$ is non-decreasing in $r$. Thus $(vEv_{1,2,0})$ holds up to time $t_0/2$. Conversely, if $(vEv_{1,2,0})$ holds up to time $t_0/2$, then we have
 \[\sup_{0<\tilde{t}\leq t_0}\|\e^{-(\tilde{t}/2)\Delta}v^{1/2}_{\sqrt{\tilde{t}}}\|_{1,2} = \sup_{0<t\leq t_0/2}\|\e^{-t\Delta}v^{1/2}_{\sqrt{2t}}\|_{1,2} \leq \sup_{0<t\leq t_0/2} C\|\e^{-t\Delta}v^{1/2}_{\sqrt{t}}\|_{1,2} <+\infty,\]
 where we used that $v_{\sqrt{2t}}(x) \leq v_{2\sqrt{t}}(x) \leq Cv_{\sqrt{t}}(x)$ by the non-decreasing and the $v$-doubling~\eqref{doubling} properties of $v_r(x)$. Hence $(vEv_{1,\infty, \frac{1}{2}})$ holds up to time $t_0$, and this completes the proof.  
\end{proof}

\begin{prop}
Assume that $M$ satisfies~\eqref{doubling} and~\eqref{hkestimate}. Moreover, let $q\in(2,\infty]$, $\frac{q-2}qn<2$. Then~\eqref{GN} holds with $R_0=R/2$.
\end{prop}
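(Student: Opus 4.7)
The plan is to bootstrap from the assumption on the on-diagonal heat kernel to a Gagliardo-Nirenberg estimate by writing $f$ as the difference between itself at time zero and a suitable time, and controlling the resulting integrand via interpolated $(vEv_{2,q,1/2-1/q})$ bounds.

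\textbf{Step 1: upgrade~\eqref{hkestimate} to operator norm estimates.} By Proposition~\ref{hknorm}, the combination of~\eqref{doubling} and~\eqref{hkestimate} yields $(vEv_{2,\infty,1/2})$ up to time $t_0/2=R^2/8$. The semigroup identity $\e^{-t\Delta}=\e^{-(t/2)\Delta}\e^{-(t/2)\Delta}$ together with the contractivity of $\e^{-(t/2)\Delta}$ on $L^2$ and the doubling estimate $v_{\sqrt{t}}(x)\le C v_{\sqrt{t/2}}(x)$ extends the bound to all $t\le t_0=R^2/4$ at the cost of a larger constant. Since $\e^{-t\Delta}$ is an $L^2$-contraction, $(vEv_{2,2,0})$ holds trivially. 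Riesz-Thorin interpolation between these two then gives, for every $q\in(2,\infty]$,
\[
\Bigl\Vert v_{\sqrt t}^{1/2-1/q}\e^{-t\Delta}\Bigr\Vert_{2,q}\le C,\qquad 0<t\le t_0.
\]

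\textbf{Step 2: the decomposition.} Pick $T=r^2\le R^2/4=t_0$. For $f$ in the domain of $\Delta^{1/2}$, integrating $\tfrac{\drm}{\drm t}\e^{-t\Delta}f=-\Delta\e^{-t\Delta}f$ gives
\[
f=\e^{-T\Delta}f+\int_0^T \Delta\e^{-t\Delta}f\,\drm t.
\]
I will estimate the $q$-norm of $v_r^{1/2-1/q}$ times each summand separately.

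\textbf{Step 3: the semigroup term.} Since $\sqrt T=r$, applying Step~1 directly gives
\[
\bigl\Vert v_r^{1/2-1/q}\e^{-T\Delta}f\bigr\Vert_q=\bigl\Vert v_{\sqrt T}^{1/2-1/q}\e^{-T\Delta}f\bigr\Vert_q\le C\Vert f\Vert_2.
\]

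\textbf{Step 4: the integral term.} Factor $\Delta\e^{-t\Delta}=\e^{-(t/2)\Delta}\Delta^{1/2}\e^{-(t/2)\Delta}\Delta^{1/2}$ and use the spectral bound $\Vert\Delta^{1/2}\e^{-(t/2)\Delta}\Vert_{2,2}\le C t^{-1/2}$ to obtain
\[
\bigl\Vert v_r^{1/2-1/q}\Delta\e^{-t\Delta}f\bigr\Vert_q
\le C t^{-1/2}\bigl\Vert v_r^{1/2-1/q}\e^{-(t/2)\Delta}\bigr\Vert_{2,q}\Vert\Delta^{1/2}f\Vert_2.
\]
For $t\le T=r^2$, the doubling hypothesis~\eqref{doubling} gives
$v_r(x)\le C_D (r/\sqrt{t/2})^n v_{\sqrt{t/2}}(x)$, hence, as a pointwise multiplier estimate,
\[
\bigl\Vert v_r^{1/2-1/q}\e^{-(t/2)\Delta}\bigr\Vert_{2,q}
\le C\Bigl(\tfrac{r}{\sqrt t}\Bigr)^{n(1/2-1/q)}\bigl\Vert v_{\sqrt{t/2}}^{1/2-1/q}\e^{-(t/2)\Delta}\bigr\Vert_{2,q}
\le C\Bigl(\tfrac{r}{\sqrt t}\Bigr)^{n(q-2)/(2q)},
\]
where Step~1 was used in the last inequality. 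Integrating,
\[
\int_0^{r^2}t^{-1/2}\Bigl(\tfrac{r}{\sqrt t}\Bigr)^{n(q-2)/(2q)}\drm t
=r^{n(q-2)/(2q)}\int_0^{r^2}t^{-\frac12-\frac{n(q-2)}{4q}}\drm t=C r,
\]
where the integral converges precisely under the hypothesis $n(q-2)/q<2$ and evaluates, up to a dimensional constant, to $r$.

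\textbf{Step 5: conclusion.} Combining Steps~3 and~4,
\[
\bigl\Vert v_r^{1/2-1/q}f\bigr\Vert_q
\le C\bigl(\Vert f\Vert_2+r\Vert\Delta^{1/2}f\Vert_2\bigr),\qquad r\le R_0=R/2.
\]
A density argument extends this from the domain of $\Delta^{1/2}$ to all admissible $f$, proving~\eqref{GN}.

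The main subtlety is the interplay between $r$ and $\sqrt t$ in Step~4: we can only apply the intrinsic heat-kernel bounds at the natural scale $\sqrt t$, so doubling must be paid for when converting them into estimates at scale $r$, and the condition $n(q-2)/q<2$ is exactly what makes the resulting $t^{-1/2-n(q-2)/(4q)}$ singularity integrable at $0$.
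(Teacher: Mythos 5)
Your proof is correct and follows essentially the same route as the paper's: the same fundamental-theorem decomposition $f=\e^{-T\Delta}f+\int_0^T\Delta\e^{-t\Delta}f\,\drm t$, the same factorization $\Delta\e^{-t\Delta}=\e^{-(t/2)\Delta}\Delta^{1/2}\e^{-(t/2)\Delta}\Delta^{1/2}$ with the spectral bound, the same doubling comparison between the scales $r$ and $\sqrt t$, and the same integrability condition $n(q-2)/q<2$. Your Step~1 merely makes explicit the interpolation from $(vEv_{2,\infty,\frac12})$ to the weighted $(2,q)$ operator bound that the paper leaves implicit in~\eqref{normest}.
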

\begin{proof}
The proof is adapted from the beginning of Section~2 and Proposition~2.3.2 of~\cite{BoutayebCoulhonSikora-15}. According to Proposition~\ref{hknorm},~\ref{doubling} and~\ref{hkestimate} for $R>0$ implies the existence of a $C>0$ such that
\begin{align}\label{normest}
H:=\sup_{t\in(0,R/2]} \Vert v_{\sqrt t}^{\frac 12-\frac 1q}\euler^{-t\Delta}\Vert_{2,q}\leq C.
\end{align}
The fundamental theorem gives for any $f\in L^2$
\[
f=\euler^{-t\Delta} f+\int_0^t \Delta\euler^{-s\Delta} f\drm s.
\]
Thus, putting $\alpha=\frac 12-\frac 1q$,
\begin{align*}
\Vert v_{\sqrt t}^\alpha f\Vert_q&\leq \Vert v_{\sqrt t}^\alpha \euler^{-t\Delta}\Vert_{2,q}\Vert f\Vert_2+\int_0^t \Vert v_{\sqrt t}^\alpha \euler^{-s\Delta/2}\Vert_{2,q}\Vert \Delta\euler^{-s\Delta/2}f\Vert_2 \drm s.
\end{align*}
Using~\eqref{doubling}, we get 
\begin{align*}
\Vert v_{\sqrt t}^\alpha f\Vert_q
&\leq \Vert v_{\sqrt t}^\alpha \euler^{-t\Delta}\Vert_{2,q}\Vert f\Vert_2+\int_0^t \Vert \frac{v_{\sqrt t}}{v_{\sqrt{s/2}}} \Vert_\infty^\alpha\Vert v_{\sqrt{s/2}}^\alpha \euler^{-s\Delta/2}\Vert_{2,q}\Vert \Delta\euler^{-s\Delta/2}f\Vert_2 \drm s\\
&\leq H\Vert f\Vert_2 +C_D2^{n/2}\int_0^t \left(\frac ts\right)^{n\alpha/2}\Vert \Delta^{1/2}\euler^{-s\Delta/2}\Delta^{1/2}f\Vert_2\drm s\\
&\leq H\Vert f\Vert_2 +GH t^{n\alpha/2}\int_0^t s^{-n\alpha/2-1/2}\Vert \Delta^{1/2}f\Vert_2\drm s,
\end{align*}
and the last integral is finite.
Hence, for all $\sqrt t\in(0,R/2]$,
\begin{align*}
\Vert v_{\sqrt t}^\alpha f\Vert_q\leq C(\Vert f\Vert_2 +\sqrt t\Vert \Delta^{1/2}f\Vert_2)
\end{align*}
for some $C$ depending only on $q$, $C_D$, and $n$. Putting $r=\sqrt t$ yields the result.
\end{proof}

\begin{cor}\label{cor:GN}Suppose $R>0$, and $2p>n\geq2$. There is an $\epsilon>0$ such that if a manifold $M$ of dimension $n$ satisfies
\[
\kappa_M(p,R)\leq \epsilon,
\]
then~\eqref{GN} holds for $R_0=R/2$.
\end{cor}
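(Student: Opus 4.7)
The plan is to reduce Corollary~\ref{cor:GN} to the preceding proposition by establishing that the smallness hypothesis $\kappa_M(p,R)\leq\epsilon$ forces both the local volume doubling condition~\eqref{doubling} and the local Gau\ss{}ian heat kernel upper bound~\eqref{hkestimate} on $M$. Once both are in hand, the previous proposition yields~\eqref{GN} with $R_0=R/2$, and the corollary follows directly.

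First, I would invoke the local volume doubling estimate under integral Ricci curvature bounds due to Petersen--Wei~\cite{PetersenWei-97,PetersenWei-01}: for $p>n/2$ there exists $\epsilon_1=\epsilon_1(n,p)>0$ such that if $R^2\kappa_M(p,R)\leq \epsilon_1$ (in fact the scale-invariant version of $\kappa_M(p,R)$; note that one may always rescale so that this threshold is directly in terms of $\kappa_M(p,R)$ for $R$ bounded), then there exists $C_D=C_D(n,p)>0$ with
\[
v_r(x)\leq C_D\Bigl(\frac{r}{s}\Bigr)^n v_s(x),\qquad x\in M,\ 0<s\leq r\leq R.
\]
This gives~\eqref{doubling} on the required scale. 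Second, I would appeal to the short-time Gau\ss{}ian heat kernel bound under small integral Ricci curvature, e.g.\ Dai--Wei--Zhang~\cite{DaiWeiZhang-18} or~\cite{Rose-17,ZhangZhu-17,ZhangZhu-18}: there exists $\epsilon_2=\epsilon_2(n,p)>0$ and $C=C(n,p)>0$ such that if $\kappa_M(p,R)\leq \epsilon_2$, then
\[
p_t(x,y)\leq \frac{C}{v_{\sqrt t}(x)^{1/2}\,v_{\sqrt t}(y)^{1/2}},\qquad t\in\bigl(0,R^2/4\bigr],\ x,y\in M,
\]
which is exactly~\eqref{hkestimate} up to time $t_0=R^2/4$.

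Setting $\epsilon=\min(\epsilon_1,\epsilon_2)$, the previous proposition applies with this $R$ and yields the Gagliardo--Nirenberg inequality~\eqref{GN} for all $r\leq R_0=R/2$ and every admissible $q\in(2,\infty]$ with $\frac{q-2}{q}n<2$; the constant depends only on $n$, $p$, $q$, and $C_D$. This completes the argument.

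The only non-routine point is making sure the two auxiliary results we cite are formulated on the local scale $R$ that matches~\eqref{doubling} and~\eqref{hkestimate}. Both Petersen--Wei's doubling estimate and the Dai--Wei--Zhang / Rose heat kernel bound are naturally local in that only balls and times up to a fixed scale appear, so the thresholds $\epsilon_1,\epsilon_2$ genuinely depend only on $n$ and $p$. No further ingredients, such as the extension operator of Theorem~\ref{main1}, enter here — this corollary is purely a statement about the ambient manifold $M$, and the use of Theorem~\ref{main1} will only come in later, when one passes from $M$ to $(r,H,K)$-regular subdomains $\Omega$ in the proof of Theorem~\ref{main2}.
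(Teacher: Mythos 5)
Your proposal matches the paper's proof exactly: the paper also deduces \eqref{doubling} from Petersen--Wei~\cite{PetersenWei-97,PetersenWei-01} and \eqref{hkestimate} from~\cite{Rose-17a,DaiWeiZhang-18}, shrinking $\epsilon$ as needed, and then applies the preceding proposition. Your additional remarks on scale-invariance and on the fact that the extension operator plays no role here are accurate but not needed beyond what the paper states.
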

\begin{proof} According to~\cite{PetersenWei-97,PetersenWei-01},~\eqref{doubling} holds up to radius $R$ for some $\epsilon>0$, while  the heat kernel upper bound follows from~\cite{Rose-17a, DaiWeiZhang-18} by choosing $\epsilon$ possibly smaller.
\end{proof}

\begin{proof}[Proof of Theorem~\ref{main2}] The proof is adapted from~\cite{ChoulliKayserOuhabaz-15}.
If $\Omega\subset M$ is $(r,H,K)$-regular, then it satisfies in particular the rolling $r$-ball condition for some $r\leq r_0$, and $r_0$ depends on $K$ and $H$ only. According to~\cite{Olive-19}, there exists a $C>0$ depending on $R,p,n$ such that
\begin{align}\label{omegadoubling}
\Vol_\Omega(B(x,t))\leq C\left(\frac ts\right)^n\Vol_\Omega(B(x,s)), \quad x\in\Omega, 0<s\leq t\leq \diam \Omega\leq R.
\end{align}
Moreover, according to Theorem~\ref{main1}, $(r,H,K)$-regularity implies that there is an extension operator $E_\Omega$ with norm bounded in terms of $H,K,r$. Note that by construction, $\Vert E_\Omega\Vert_{L^2(\Omega),L^2(M)}$ is bounded as well and does not depend on any curvature restrictions but on the rolling $r$-ball condition. Moreover, we have
\begin{equation*}
  \Vol_\Omega(B(x,t))\leq \Vol(B(x,t)),\quad t>0.
\end{equation*}
Abbreviate $A=\Vert E_\Omega\Vert_{L^2(\Omega),L^2(M)}$ and $B=\Vert E_\Omega\Vert_{H^1(\Omega),H^1(M)}$. By the local Gagliardo-Nirenberg inequality, Corollary~\ref{cor:GN} and the existence of a extension operator $E_\Omega$ from Theorem~\ref{main1}, for any $s\in (0,R/2]$, $q\in[2,\infty]$, $\frac{q-2}qn<2$, $f\in \cC^1(\overline\Omega)$, we have
\begin{align*}
\Vert \Vol_\Omega(B(x,s))^{1/2-1/q}f\Vert_{L^q(\Omega)}
&\leq \Vert v_s^{1/2-1/q} E_\Omega f\Vert_{L^q(\Omega)}\\
&\leq C(\Vert E_\Omega f\Vert_{L^2(M)}+s \Vert\nabla E_\Omega f\Vert_{L^2(M)})\\
&\leq C(A\Vert f\Vert_{L^2(\Omega)}+  B s (\Vert f\Vert_{L^2(\Omega)}+\Vert \nabla f\Vert_{L^2(\Omega)}))\\
&\leq C\max(A,B)((1+s)\Vert f\Vert_{L^2(\Omega)}+s\Vert\nabla f\Vert_{L^2(\Omega)})\\
&\leq C\max(A,B)(1+\diam(\Omega))(\Vert f\Vert_{L^2(\Omega)}+s\Vert\nabla f\Vert_{L^2(\Omega)})\\
&\leq C(n,p,r,K,H,R)(\Vert f\Vert_{L^2(\Omega)}+s\Vert \nabla f\Vert_{L^2(\Omega)}),
\end{align*}
i.e., the Gagliardo-Nirenberg inequality on $\Omega$ for all $s\in(0,R/2]$. For the case $s>R/2$, note that $\Vol_\Omega(B(x,s))=\Vol(\Omega)$ for $s\geq\diam \Omega$, hence the case $s>\diam(\Omega)$ reduces to the case $s=\diam(\Omega)$. Thus, the Gagliardo Nirenberg inequality holds for all $s>0$ on $\Omega$. To derive the desired Neumann heat kernel upper bound, we want to apply \cite[Theorem~1.1]{BoutayebCoulhonSikora-15} directly. More precisely, the latter theorem shows that global volume doubling and global Gagliardo-Nirenberg inequalities on $\Omega$ yield an all-time upper bound for the Neumann heat kernel. The only thing that is still needed to check is whether inside $\Omega$, volumes of different balls of the same radius are comparable, i.e., condition $(D_v')$ in the notation of the latter paper for $v=\Vol_\Omega$. If $s>0$ and $x,y\in\Omega$, $d(x,y)\leq s$, then $B(y,s)\subset B(x,2s)$, such that~\eqref{omegadoubling} implies 
\begin{align*}
\Vol_\Omega(B(y,s))\leq \Vol_\Omega(B(x,2s))\leq 2^n C \Vol_\Omega(B(x,s)).
\end{align*}
Hence, the theorem follows, and the constants appearing depend only on the dimension, the doubling constant and radius, and the heat kernel upper bound.
\end{proof}

\begin{proof}[Proof of Corollary~\ref{cor:kato}] According to~\cite{RoseWei-20}, there exists an explicit constant $\epsilon=\epsilon(n,r,H,K)>0$ such that if
\begin{align}\label{Katoexp}
\kappa_T(\rho_-)\leq \int_0^T\Vert H_t^\Omega \rho_-\Vert_\infty\drm t\leq \epsilon,
\end{align}
then all the conclusions of the corollary hold.  Here $(H_t^\Omega)_{t\geq 0}$ is the Neumann heat semigroup of $\Omega$. First, note that by the Dunford-Pettis theorem, Theorem~\ref{main2}, and volume doubling on $\Omega$, we have for all $t\leq R^2$
\[
\Vert H^\Omega_t\Vert_{1,\infty}=\sup_{x,y\in\Omega} h_t^\Omega(x,y)\leq \sup_{x,y\in\Omega}\frac{\bar C}{\Vol_\Omega(B(x,\sqrt t))^{1/2}\Vol_\Omega(B(y,\sqrt t))^{1/2}}\leq \frac{\tilde C}{\Vol(\Omega)}\left(\frac R{\sqrt t}\right)^n.
\]
Thus, since $\Vert H_t^\Omega\Vert_{\infty,\infty}\leq 1$ and by duality, the Riesz-Thorin interpolation theorem implies
\[
\Vert H^\Omega_t\Vert_{p,\infty}\leq C_R\Vol(\Omega)^{-1/p} t^{-n /2p}.
\]
Hence,
\[
\kappa_T(\rho_-)\leq \int_0^T \Vert H_t^\Omega\rho_-\Vert_\infty\drm t\leq \int_0^T \Vert H_t^\Omega\Vert_{p,\infty}\Vert\rho_-\Vert_{p,\Omega}\drm t \leq C_R\Vol(\Omega)^{-1/p}\Vert\rho_-\Vert_{p,\Omega}\int_0^T t^{-n /2p}\drm t,
\]
and the latter function is integrable provided $n /2p<1$, i.e., $p>n/2$. Thus, the result follows by taking $T=R^2$ and forcing the right-hand side to be smaller than $\epsilon$.
\end{proof}
\appendix
\section{Appendix: Proofs of Propositions~\ref{hessiancomparison} and ~\ref{prop:volumecomparison}}\label{appendix}
We derive a differential inequality for the metric tensor along a geodesic depending on upper and lower bounds of the sectional and mean curvature that we compare with the solution $\lambda$ of the differential equation

\begin{equation}\label{comparisonfunction}
\lambda'+\lambda^2=-k,\quad \lambda(0)=h,
\end{equation}
where $k,h\in\RR$ are a lower (resp.\ upper) bound for the sectional curvature along the geodesic and upper (resp.\ lower) bound on the principal curvatures of $N$.
By substituting $\lambda=\frac {\mu'}{\mu+C}$ for $C\in\RR$, this equation transforms into the solvable differential equation
\begin{equation}\label{substitution}
\mu''+k\mu=-kC, \quad \mu'(0)=h(\mu(0)+C).
\end{equation}
Choosing $\mu(0)=1$ and $C$ appropriately guarantees the existence of a $t_0>0$ and a unique solution $\mu_{k,H}$ that is positive on an interval $(0,t_0]$.

\begin{proof}[Proof of Proposition~\ref{hessiancomparison}]
We only show Equation~\eqref{lowerbound} by following the proof of~\cite[Theorem~27]{Petersen-06}. The upper bound for the metric can be proven similarly. Observe that the initial conditions for the Hessian of $s$ are given by
\[
\Hess s(0)=\II_{c(0)}.
\]
Fix $\theta\in N$ and define
\[
\lambda(s):=\lambda(s,\theta):=\max_{v\perp \partial_s}\frac{\Hess s(v,v)}{g(v,v)},
\]
that is Lipschitz and hence absolutely continuous. Let $v$ be a vector such that 
\[
\Hess s(v,v)=\lambda(s_0)g_s(v,v),
\]
where $s_0$ is a point where $\lambda$ is differentiable, and extend $v$ to a parallel field $V$. The function $\phi(s):=\Hess s(V,V)$ satisfies $\phi(s)\leq \lambda(s)$ and $\phi(s_0)=\lambda(s_0)$. This yields, since $V$ is parallel,
\begin{align*}
\lambda'(s_0)+\lambda^2(s_0)&=\partial_s\Hess s(v,v)+\Hess s^2(v,v)=\nabla_{\partial_s}\Hess s(v,v)+\Hess s^2(v,v)\\
&=-g(R(v,\partial_s)\partial_s,v)\leq -k. 
\end{align*}
Moreover, $\lambda(0)=H_+$.
Thus, by Riccati comparison, the Hessian satisfies
\[
\Hess s\leq \frac{\mu_{k,H_+}'(s)}{\mu_{k,H_+}(s)}g_s.
\]
In general, we have 
\[
\partial_s g_s=2\Hess s,
\]
yielding
\begin{align}\label{proof:inequ}
\partial_sg_s\leq 2 \frac{\mu_{k,H_+}'(s)}{\mu_{k,H_+}(s)}g_s.
\end{align}
To get the desired estimate for the metric, we compare this differential inequality with the conformal variation
\[
h_s=\mu_{k,H_+}^2(s)g_0, \quad \mu_{k,H_+}(0)=1, s\in (0,r_0).
\]

This variation satisfies
\[
\partial_s h_s=2\frac{\mu_{k,H_+}'(s)}{\mu_{k,H_+}(s)}h_s, \quad h_0=g_0.
\]
Comparing this equality with Equation~\eqref{proof:inequ} yields the claim.
\end{proof}

To get Proposition~\ref{prop:volumecomparison}, we adopt the comparison argument from~\cite{Petersen-06} to our situation. In general for the decomposition~\eqref{metricdecomposition} of our metric, we have 
\[
\partial_s \dvol= \Delta s \dvol
\]
where $\Delta s=\Tr \II$ denotes the mean curvature of the distance hypersurface. If we decompose the volume element into 
\[
\dvol = \lambda (s,\theta) \drm s \dvol_s,
\]
we see that the equation above for fixed $\theta\in N$ reduces to
\[
\lambda'(s)=\Delta s \  \lambda(s),
\]
where $\Delta s(0)=H:=\Tr \II(0)$. We want to compare this metric with the conformal variation 
\[
h_s=\mu_{k,H}^2(s)g_0,
\]
on $N$. Note that we have 
\[
\dvol_s^{k,H}=\mu_{k,H}^{n-1}(s)\dvol_0,
\]
and 
\[
\partial_s(\dvol^{k,H}_s)=\partial_s \mu_{k,H}^{n-1}\dvol_0=(n-1)\frac{\mu_{k,H}'(s)}{\mu_{k,H}(s)} \dvol_s^{k,H},\quad \frac{\mu_{k,H}'(0)}{\mu_{k,H}(0)}=HC.
\]
The mean curvature can be controlled by the following result by Eschenburg.
\begin{lem}[{\cite[Theorem~4.1]{Eschenburg-87}}] For $M,N,c,r_0, H$ as above, we have 
\begin{align}\label{meancomparison}
\Delta s\leq \mu_{k,H}(s), \quad s\in[0,r_0),  m_{k,H}(0)=\Tr \II(0).
\end{align}
For $s<0$, the opposite inequality holds.
\end{lem}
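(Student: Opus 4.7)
The plan is to derive a Riccati differential inequality for the mean curvature $m(s):=\Delta s$ of the distance hypersurfaces along the normal geodesic $\gamma$, and then compare it with the model equation whose solution defines $\mu_{k,H}$. Write $S(s)$ for the shape operator of the distance hypersurface $\psi^{-1}(s,\cdot)$ along $\gamma$, so that $m(s)=\Tr S(s)$. From the fundamental Riccati equation for normal geodesics,
\[
S'(s) + S(s)^2 + R_{\partial_s}(s) = 0,
\]
where $R_{\partial_s}$ denotes the curvature operator $v\mapsto R(v,\partial_s)\partial_s$ restricted to $(T_{\gamma(s)}N_s)$, I would take traces to obtain
\[
m'(s) = -\Vert S(s)\Vert_{\HS}^2 - \Ric(\partial_s,\partial_s).
\]

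Next, the Cauchy--Schwarz inequality applied to the self-adjoint operator $S(s)$ on an $(n-1)$-dimensional subspace gives $\Vert S(s)\Vert_{\HS}^2 \geq m(s)^2/(n-1)$, and the sectional curvature bound $\Sec \geq k$ along $\gamma$ implies $\Ric(\partial_s,\partial_s)\geq (n-1)k$. Combining these estimates yields the Riccati differential inequality
\[
m'(s) + \frac{m(s)^2}{n-1} + (n-1)k \leq 0, \qquad m(0)=\Tr\II(\gamma(0)) = H.
\]
On the other hand, the model function $\mu_{k,H}$ (interpreted here as the comparison mean curvature with $\mu_{k,H}(0)=H$, in agreement with the display $m_{k,H}(0)=\Tr\II(0)$) satisfies the corresponding equality, namely the Riccati equation
\[
\mu_{k,H}'(s) + \frac{\mu_{k,H}(s)^2}{n-1} + (n-1)k = 0, \qquad \mu_{k,H}(0)=H,
\]
whose linearization via the substitution in~\eqref{substitution} shows it is well-defined and smooth on $[0,r_0)$ by the choice of $r_0$ via Lemma~\ref{lemfocaldistance}.

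The conclusion follows by standard Riccati comparison: setting $\delta(s):=m(s)-\mu_{k,H}(s)$, one has $\delta(0)=0$ and $\delta'(s)\leq -\frac{1}{n-1}(m(s)+\mu_{k,H}(s))\delta(s)$, a linear differential inequality forcing $\delta(s)\leq 0$ on $[0,r_0)$. This proves $\Delta s \leq \mu_{k,H}(s)$ for $s\in[0,r_0)$. For $s<0$, one parametrizes the geodesic in the opposite direction, which flips the sign of the initial datum and of the evolution parameter, thereby reversing the inequality in the comparison argument.

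The main obstacle is bookkeeping on signs and conventions: the orientation of $\nu$, the sign of $\II$ with respect to inward versus outward pointing normals, and the decomposition $\dvol=\lambda\,ds\,\dvol_0$ used immediately afterwards must all be consistent so that the inequality $\Delta s\leq \mu_{k,H}(s)$ translates correctly into the volume-element comparison~\eqref{volumecomparison} of Proposition~\ref{prop:volumecomparison}. Once the Riccati inequality is in place, no further geometric input is required beyond the absence of focal points on $[0,r_0)$, which is guaranteed by Lemma~\ref{lemfocaldistance} and prevents $\mu_{k,H}$ from vanishing on the interval under consideration.
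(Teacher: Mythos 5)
Your argument is correct, but note that the paper does not prove this lemma at all: it is quoted verbatim from Eschenburg (Theorem~4.1 of the cited reference), so there is no in-paper proof to compare against. What you have written is essentially the standard proof of that cited result: trace the Riccati equation $S'+S^2+R_{\partial_s}=0$ for the shape operator of the distance hypersurfaces, apply Cauchy--Schwarz to get $\Vert S\Vert_{\HS}^2\geq m^2/(n-1)$, use $\Sec\geq k\Rightarrow\Ric(\partial_s,\partial_s)\geq(n-1)k$, and conclude by scalar Riccati comparison. This is a legitimate, self-contained route, and it has the small advantage of making explicit that the mean curvature comparison needs only a Ricci lower bound, whereas the paper's Proposition~\ref{hessiancomparison} (the full Hessian comparison) genuinely uses the sectional bound. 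Two points deserve care. First, you correctly flag that the lemma's display is notationally off: the right-hand side should be the model \emph{mean} curvature $m_{k,H}$ (equivalently $(n-1)\mu_{k,H}'/\mu_{k,H}$ with $\mu_{k,H}$ as in~\eqref{substitution}), not the function $\mu_{k,H}$ itself, and your reading via the initial condition $m_{k,H}(0)=\Tr\II(0)$ is the right one; just be aware that your comparison ODE carries the factor $1/(n-1)$ and so linearizes via $m=(n-1)\mu'/\mu$ rather than literally via~\eqref{substitution}. Second, the Gronwall step $\delta'\leq-\tfrac{1}{n-1}(m+\mu)\delta$, $\delta(0)=0\Rightarrow\delta\leq0$ is fine on $[0,r_0)$ precisely because the absence of focal points guarantees $m$ stays finite there, which you note. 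No genuine gap.
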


\bibliographystyle{alpha}

\providecommand{\bysame}{\leavevmode\hbox to3em{\hrulefill}\thinspace}
\providecommand{\MR}{\relax\ifhmode\unskip\space\fi MR }
\providecommand{\MRhref}[2]{%
  \href{http://www.ams.org/mathscinet-getitem?mr=#1}{#2}
}
\providecommand{\href}[2]{#2}

\end{document}